\DeclarePairedDelimiter\paren{\lparen}{\rparen}
\DeclarePairedDelimiter\brackets{[}{]}
\DeclarePairedDelimiter\braces{\{}{\}}
\DeclarePairedDelimiter\angles{\langle}{\rangle}
\DeclarePairedDelimiter\ceil{\lceil}{\rceil}
\DeclarePairedDelimiter\floor{\lfloor}{\rfloor}
\DeclarePairedDelimiter\abs{\lvert}{\rvert}
\DeclarePairedDelimiterX{\closedStochasticInterval}[1]{[}{]}{\!\delimsize[#1\delimsize]\!}
\DeclarePairedDelimiterX{\leftOpenStochasticInterval}[1]{]}{]}{\!\delimsize]#1\delimsize]\!}
\DeclarePairedDelimiterX{\rightOpenStochasticInterval}[1]{[}{[}{\!\delimsize[#1\delimsize[\!}
\newcommand{\aldousStoppingTime}{{\hat\tau}}
\newcommand{\weakConvergenceTo}{\Rightarrow}
\newcommand{\diff}{\,\mathrm d}
\newcommand{\hittingTimeTo}[1]{H^{#1}}
\newcommand{\hittingTimeFromTo}[2]{H^{#1 \,\to\, #2}}
\newcommand{\indicator}{\mathds{1}}
\newcommand{\EE}{\mathds{E}}
\newcommand{\NN}{\mathds{N}}
\newcommand{\PP}{\mathds{P}}
\newcommand{\RR}{\mathds{R}}
\newcommand{\scE}{\mathcal{E}}
\newcommand{\scF}{\mathcal{F}}
\newcommand{\scG}{\mathcal{G}}
	\newtheoremstyle{boldremark}
		{\topsep}   
		{\topsep}   
		{}          
		{}          
		{\bfseries} 
		{.}         
		{.5em}      
		{}          
	\newtheorem{theorem}{Theorem}[section]
	\newtheorem{proposition}[theorem]{Proposition} 
	\newtheorem{lemma}[theorem]{Lemma} 
	\newtheorem{corollary}[theorem]{Corollary}
	\theoremstyle{boldremark}
	\newtheorem{remark}[theorem]{Remark}
	\crefname{equation}{equation}{equations}%
\numberwithin{equation}{section}
\numberwithin{theorem}{section}
\author{%
Dirk Becherer,
Todor Bilarev\footnote{Support by German Science foundation DFG via Berlin Mathematical School BMS and 
research training group RTG1845 StoA  is gratefully acknowledged.} ,
Peter Frentrup\footnote{Email addresses: becherer,bilarev,frentrup@math.hu-berlin.de}
\\
Institute of Mathematics, Humboldt-Universität zu Berlin
}
\title{Approximating diffusion reflections at elastic boundaries}
\begin{document}

\maketitle

\begin{abstract}
We show  a probabilistic functional limit result
for
one-dimensional diffusion processes that are reflected at an elastic boundary 
which is
a 
function of the reflection local time. 
Such processes are constructed as  limits of  a sequence of diffusions which are discretely reflected by small jumps at an elastic boundary, with reflection local times being approximated by $\varepsilon$-step processes.
The construction yields the Laplace transform of the inverse local time for reflection.
Processes and approximations of this type play a role in finite fuel problems of singular stochastic control. 
	
	\vspace*{2ex}
	\textbf{Keywords}: Reflected diffusion, elastic boundary, inverse local time, Laplace transform
	
	\textbf{MSC2010 subject classifications}:  60F17, 60J50, 60J55, 60J60, 65C30
\end{abstract}

\section{Introduction}

The classical Skorokhod problem is that of reflecting a path at a boundary. It is a standard tool to construct solutions to SDEs with reflecting boundary conditions.
The fundamental example is Brownian motion with values in $[0,\infty)$ being reflected at a constant boundary at zero, solved by Skorokhod~\cite{Skorokhod61}.
Starting with Tanaka~\cite{Tanaka79}, well-known generalizations concern diffusions in multiple dimensions with normal or oblique reflection at the boundary of some given (time-invariant) domain in the Euclidean space of certain smoothness or other kinds of regularity, cf.\ e.g.\ \cite{LionsSznitman84,DupuisIshii93}.
Other generalizations admit for an a-priori given but time-dependent boundary, see for instance \cite{NystromOnskog10}.

Our contribution is a functional limit result for reflection at a boundary which is a function of the reflection local-time $L$, for  general one-dimensional  diffusions $X$. 
Because of the mutual interaction between boundary and diffusion,  
see~\cref{fig:elastic boundary in X-t-space}, we call the boundary \emph{elastic}.
\begin{figure}[ht]
	\centering
	\begin{subfigure}{0.56\textwidth}
		\centering
		\includegraphics[height=4.2cm]{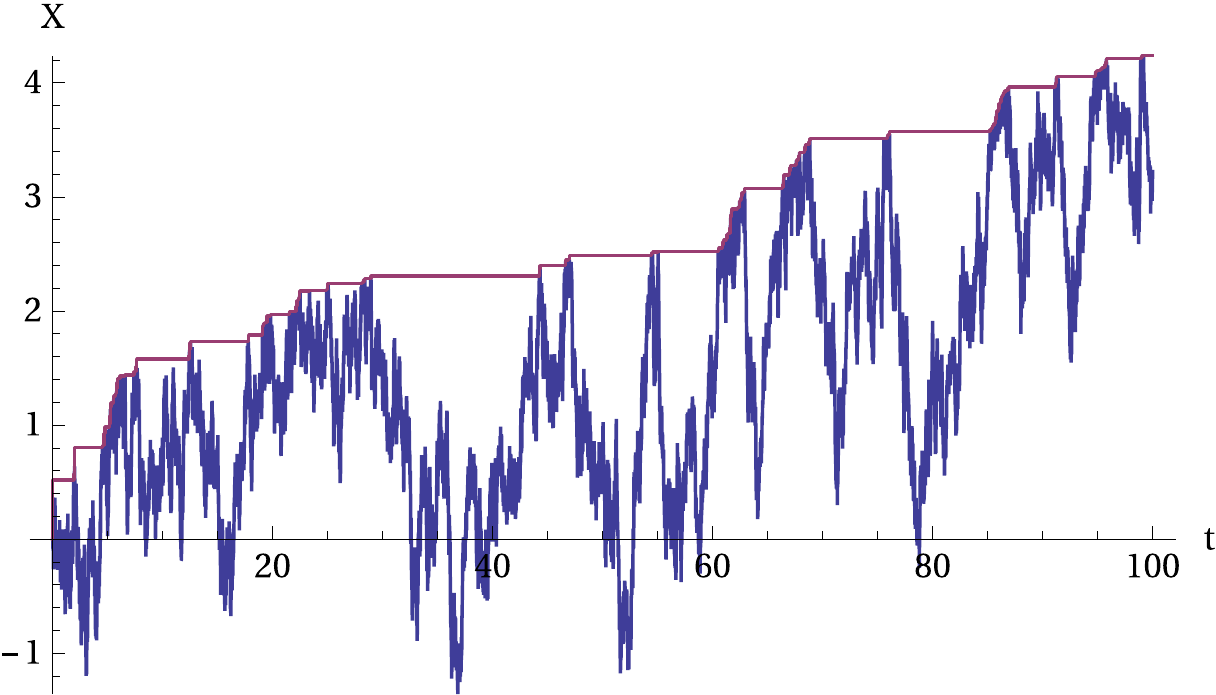}
		\caption{$X$ against real time $t$.}
		\label{fig:elastic boundary in X-t-space}
	\end{subfigure}
	\begin{subfigure}{0.42\textwidth}
		\centering
		\includegraphics[height=4.2cm]{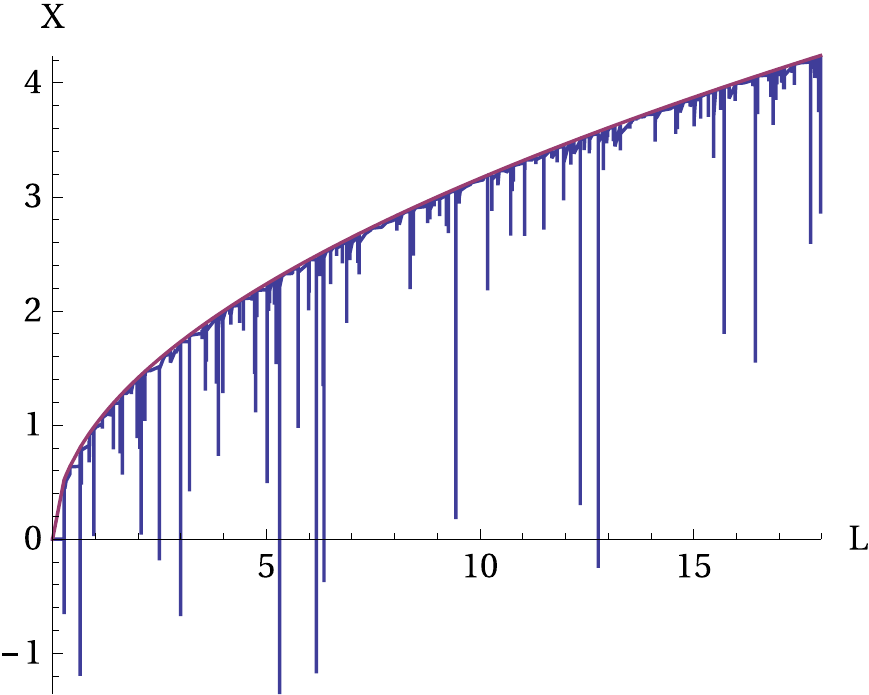}
		\caption{$X$ against local time $L$.}
		\label{fig:fixed boundary in X-L-space}
	\end{subfigure}
	\caption{Example. Brownian motion $X_t$ (blue) reflected at the elastic boundary $g(L)=\sqrt{L}$ (purple), 
where $L$ is the reflection local time of $X$ at boundary $g(L)$.}
\end{figure}
Such elastic boundaries appear typically in solutions to singular control problems of finite fuel type, where the optimal control is the reflection local time that keeps a diffusion process within a no-action region, cf.\  Karatzas and Shreve \cite{KaratzasShreve86}.
In order to explicitly construct the control (pathwise via Skorokhod's Lemma), finite fuel studies typically assume that the dynamics of the diffusion can be expressed without reference to the control (see e.g.\ \cite{Kobila93,ElKarouiKaratzas91}). 
This is different to our setup, where the non-linear mutual  interdependence between diffusion and control (local time) subverts direct construction by Skorokhod's lemma, already for OU processes \cite[Remark~1]{WardGlynn2003}.
We relate to a concrete application  in context of optimal liquidation for a financial asset position in \Cref{rmk:Opt liq problem}.

A natural idea for approximation is to proxy 'infinitesimal' reflections by small $\varepsilon$-jumps $\Delta L^\varepsilon$, thereby inducing jumps of the elastic reflection boundary, see~\cref{fig:approx diffusion in X-t-space}.
This allows to express excursion lengths of the approximating diffusion $X^\varepsilon$ in terms of independent hitting times for continuous diffusions, what naturally leads to an explicit expression \eqref{eq:Lap transf inv loc time} for the Laplace transform of the inverse local time of $X$.
In our singular control context, $L^\varepsilon$ is asymptotically optimal at first order if $L$ is optimal, see \cref{rmk:Opt liq problem}.
Our main result is \cref{thm: local time Laplace}. We prove ucp-convergence of $(X^\varepsilon, L^\varepsilon)$ to $(X,L)$ by showing  in \cref{sect: reflection approximation proof} tightness of the approximation sequence $(X^\varepsilon, L^\varepsilon)_\varepsilon$ and using Kurtz–Protter's notion of uniformly controlled variations (UCV), introduced in \cite{KurtzProtter91}.

\section{Elastic reflection: Model and notation}
\label{sect: model}
We consider a filtered probability space $\paren[\big]{\Omega,\scF,(\scF_t)_{t \ge 0},\PP}$ with one-di\-men\-sio\-nal $(\scF_t)$-Brownian motion $W$ and filtration $(\scF_t)$ satisfying the usual conditions of right-continuity and completeness.
Let $\sigma:\RR \to (0,\infty)$ and $b : \RR \to \RR$ be Lipschitz-continuous and such that the continuous $\RR$-valued $(b,\sigma)$-diffusion $\diff Z_t = b(Z_t)\diff t + \sigma(Z_t) \diff W_t$ with generator $\scG := \frac{1}{2} \sigma(x)^2\frac{\diff^2}{\diff x^2} + b(x) \frac{\diff}{\diff x}$ is regular and recurrent. 
Moreover, let $X$ be a $(b,\sigma)$-diffusion with reflection at an elastic boundary.
This means that for a given non-decreasing $g \in C^1([0,\infty))$, the processes $(X,L)$ satisfy
\begin{equation} \label{eq: diffusion}
	\diff X_t = b(X_t)\diff t + \sigma(X_t) \diff W_t - \diff L_t \,,
\qquad 
	X_0 = g(0)\,,
\end{equation}
with the reflection local time $L$ being a  continuous non-decreasing process $L$ 
that only grows when $X$ is  at the (local-time-dependent) boundary $g(L)$, i.e.\ 
\begin{equation} \label{eq: local time}
	\diff L_t = \indicator_{\braces{X_t = g(L_t)}} \diff L_t \,,
\quad 
	L_0 = 0 \,, 
\quad
\text{with }
	X_t \le g(L_t) \text{ for all } t \ge 0.
\end{equation}
Note that the reflecting boundary is not deterministic in real time and space coordinates.
Instead, the boundary $g(L)$, at which the diffusion $X$ is being reflected, is elastic in the sense that it is itself
a stochastic process which retracts when being hit, cf.\ \cref{fig:fixed boundary in X-L-space}. 
Strong existence and uniqueness of $(X,L)$ follow from classical results (cf.\ \cref{rmk: reflected-OU-difficult}) and are also an outcome of our explicit construction below, see \cref{lemma: approx weak convergence}.

We are particularly interested (see \cref{rmk:Opt liq problem}) in the inverse local time
\begin{equation} \label{def: tau}
	\tau_\ell := \inf \braces{ t > 0 \mid L_t > \ell }.
\end{equation}

\begin{remark}
Note that $\{t\geq 0\mid  X_t = g(L_t)\}$ is a.s.\ of Lebesgue measure zero by \cite[ex.~VI.1.16]{RevuzYor99}.
For a constant boundary $g(\ell) \equiv a$, Tanaka's formula for symmetric local times \cite[ex.~VI.1.25]{RevuzYor99}
hence  shows that the process $L$, that we obtain as a solution to the SDE with reflection \eqref{eq: diffusion}~--~\eqref{eq: local time}, is 
the symmetric local time of the continuous semimartingale $X$ at given level $a\in \RR$, i.e. $L_t= \lim_{\varepsilon \searrow 0} \frac{1}{2\varepsilon} \int_0^t \indicator_{(a-\varepsilon, a + \varepsilon)}(X_s) \diff \angles{X,X}_s$. 
\end{remark}

We denote by $\hittingTimeTo{y}$ the first hitting time of a point $y$ by a $(b,\sigma)$-diffusion, and  write $\hittingTimeFromTo{x}{y}$ for the hitting time when the diffusion starts in $x$.
Note that $\PP[ \hittingTimeFromTo{x}{y} < \infty ] = 1$ for all $x,y$ by our assumption on the diffusion being regular and recurrent.

\section{Approximation by small \texorpdfstring{$\varepsilon$}{\textepsilon}-reflections}
\label{sect: reflection approximation}

\begin{figure}[ht]
	\centering
	\includegraphics[height=5cm]{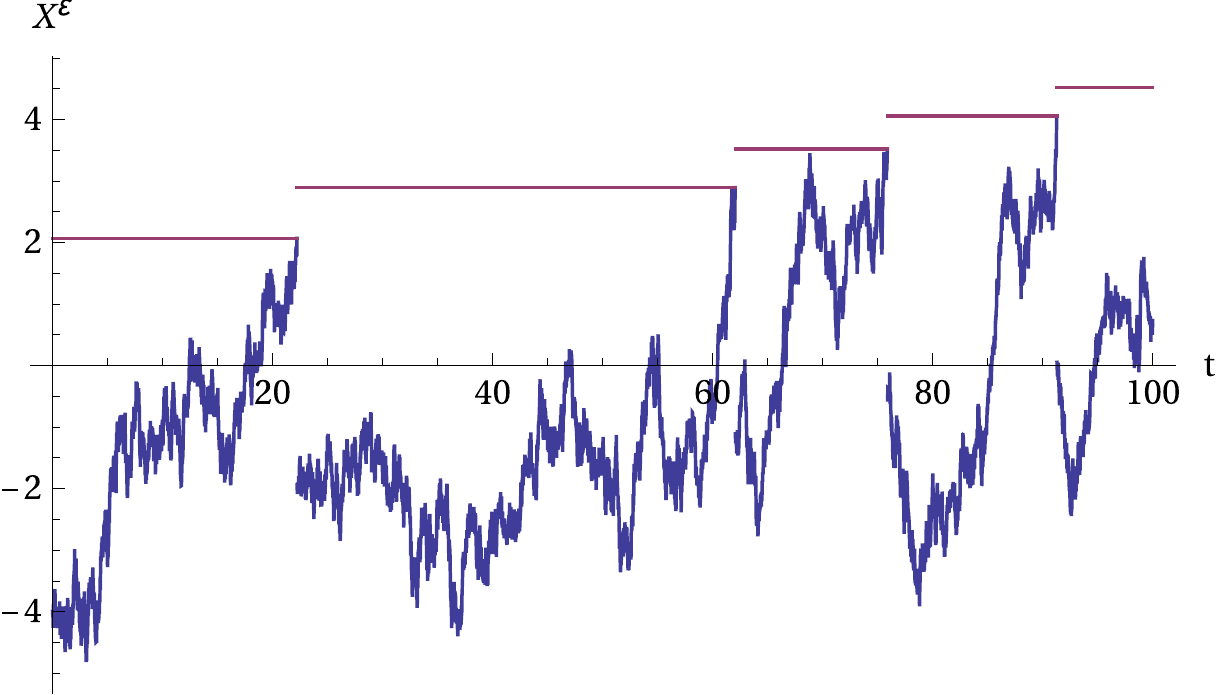}
	\caption{Approximating processes $X^\varepsilon$  and $g(L^\varepsilon)=\sqrt{L^\varepsilon}$ for $\varepsilon = 4$.}
	\label{fig:approx diffusion in X-t-space}
\end{figure}

\noindent
We construct solutions to \eqref{eq: diffusion}~--~\eqref{eq: local time} and derive 
an explicit representation 
\eqref{eq:Lap transf inv loc time} of the Laplace transform of the inverse local time at boundary $g$
 by approximating reflection by jumps in the following system of SDEs: 
\begin{align}
\label{def: approx X}
	\diff X^\varepsilon_t &\hphantom{:}= b(X^\varepsilon_t)\diff t + \sigma(X^\varepsilon_t) \diff W_t - \diff L^\varepsilon_t \,, 
	&
	X^\varepsilon_{0-} &:= g(0) \,,
\\\label{def: approx L}
	L^\varepsilon_t &:= \sum_{0 \le s \le t} \Delta L^\varepsilon_s
	\qquad\text{with }
	\Delta L^\varepsilon_t := \begin{cases}
			\varepsilon & \text{if } X^\varepsilon_{t-} = g(L^\varepsilon_{t-}),
		\\	0 & \text{otherwise},
		\end{cases}
	&
	L^\varepsilon_{0-} &:= 0 \,,
\\\label{def: approx tau}
	\tau^\varepsilon_\ell &:= \inf \braces{ t > 0 \mid L^\varepsilon_t > \ell } \quad \text{for $\ell\ge 0$}. 
\end{align}
As soon as process $X^\varepsilon$ hits the boundary, it is 
reflected  by a jump of fixed size $\varepsilon>0$.
We will speak of $L^\varepsilon$ as discrete local time, as  it is approximating  $L$ in the sense of  \cref{thm: local time Laplace}. 
Since the target reflected diffusion $X$ starts at the boundary $g$, we now have $X^\varepsilon_0=g(0)-\varepsilon$ after an initial jump $\Delta L_0^\varepsilon=\varepsilon$ away from $X^\varepsilon_{0-}:=g(0)$. 

\begin{lemma} \label{lemma: reflected approx SDE well-posed}
	For any $\varepsilon > 0$, the SDE \eqref{def: approx X}--\eqref{def: approx L} has a unique (up to indistinguishability) strong global solution $(X^\varepsilon_t,L^\varepsilon_t)_{t\ge0}$. Moreover, uniqueness in law holds.
\end{lemma}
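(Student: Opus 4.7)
The strategy is a piecewise construction in between the jump times of $L^\varepsilon$. Since $X^\varepsilon_{0-}=g(0)=g(L^\varepsilon_{0-})$, the defining rule \eqref{def: approx L} forces a mandatory initial jump $\Delta L^\varepsilon_0=\varepsilon$, so $L^\varepsilon_0=\varepsilon$ and $X^\varepsilon_0=g(0)-\varepsilon$. Inductively, suppose a unique strong solution has been built on $[0,\tau_n]$, where $\tau_n$ denotes the time of the $n$-th jump, with $L^\varepsilon_{\tau_n}=n\varepsilon$ and $X^\varepsilon_{\tau_n}=g(n\varepsilon)-\varepsilon$. On $[\tau_n,\tau_{n+1})$ the process $L^\varepsilon$ must stay constant by \eqref{def: approx L}, so \eqref{def: approx X} reduces to the continuous It\^o SDE $\diff X^\varepsilon_t=b(X^\varepsilon_t)\diff t+\sigma(X^\varepsilon_t)\diff W_t$. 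The Lipschitz hypothesis on $b,\sigma$ yields, via the classical It\^o theorem, a unique $(\scF_t)$-adapted strong solution $Z^{(n)}$ on $[\tau_n,\infty)$ with $Z^{(n)}_{\tau_n}=g(n\varepsilon)-\varepsilon$, and I set
\[
  \tau_{n+1} := \inf\braces{t\ge\tau_n : Z^{(n)}_t = g((n+1)\varepsilon)},
\]
which is a.s.\ finite by regularity and recurrence of the $(b,\sigma)$-diffusion. Extending $X^\varepsilon$ by $Z^{(n)}$ on $[\tau_n,\tau_{n+1})$ and imposing the downward jump $\Delta X^\varepsilon_{\tau_{n+1}}=-\varepsilon$, $\Delta L^\varepsilon_{\tau_{n+1}}=\varepsilon$, uniquely continues the solution through the next reflection.

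\textbf{Non-accumulation of jump times.} The main obstacle is showing $\tau_\infty:=\lim_n\tau_n=\infty$ almost surely, so that the construction yields a global solution. By the strong Markov property at $\tau_n$, the excursion lengths $\Delta_n:=\tau_{n+1}-\tau_n$ are independent, with $\Delta_n$ distributed as the hitting time of level $g((n+1)\varepsilon)$ by a $(b,\sigma)$-diffusion started from $g(n\varepsilon)-\varepsilon$, so it must traverse at least a distance of size $\varepsilon$ upwards. From \eqref{def: approx X}--\eqref{def: approx L} one has the semimartingale decomposition
\[
  L^\varepsilon_t = g(0) - X^\varepsilon_t + \int_0^t b(X^\varepsilon_s)\diff s + \int_0^t \sigma(X^\varepsilon_s)\diff W_s,
\]
whose right-hand side remains a.s.\ finite on any bounded time interval by standard BDG and Gronwall estimates applied to the stopped processes $X^\varepsilon_{\cdot\wedge\tau_n}$, exploiting the upper bound $X^\varepsilon_t\le g(n\varepsilon)$ on $[0,\tau_n]$ together with the linear growth of $b,\sigma$. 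On the hypothetical event $\braces{\tau_\infty\le T}$ the left-hand side would blow up, a contradiction forcing $\tau_\infty=\infty$ a.s.

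\textbf{Uniqueness.} Pathwise uniqueness is now inductive: on each excursion $[\tau_n,\tau_{n+1})$, $X^\varepsilon$ is pinned down by It\^o's uniqueness theorem for the continuous diffusion with Lipschitz coefficients, and both $\tau_{n+1}$ and the imposed jump are deterministic measurable functionals of the preceding path. Hence $(X^\varepsilon,L^\varepsilon)$ is unique up to indistinguishability. Uniqueness in law follows by the same inductive scheme on any filtered basis carrying a Brownian motion: each excursion piece has the law of an autonomous $(b,\sigma)$-diffusion starting from a deterministic value, and this law is uniquely determined by the generator $\scG$, so the joint law of $(X^\varepsilon,L^\varepsilon)$ does not depend on the particular stochastic basis.
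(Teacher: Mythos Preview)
Your inductive construction and the pathwise/law uniqueness arguments match the paper's and are sound (modulo a harmless off-by-one: with $L^\varepsilon_{\tau_n}=n\varepsilon$ one has $X^\varepsilon_{\tau_n}=g((n-1)\varepsilon)-\varepsilon$ and the next boundary to be hit is $g(n\varepsilon)$, not $g((n+1)\varepsilon)$). The genuine gap is in the non-accumulation step. You write $L^\varepsilon_t = g(0) - X^\varepsilon_t + \int_0^t b(X^\varepsilon_s)\diff s + \int_0^t \sigma(X^\varepsilon_s)\diff W_s$ and assert that the right-hand side remains a.s.\ finite on bounded intervals by ``standard BDG and Gronwall''. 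But this right-hand side \emph{is} $L^\varepsilon_t$; to bound it independently you would need a two-sided a priori bound on $\abs{X^\varepsilon_s}$ so that linear growth of $b,\sigma$ controls the integrals, and the only bound you invoke, $X^\varepsilon_t\le g(n\varepsilon)$, is one-sided and not uniform in $n$ when $\lim_\ell g(\ell)=\infty$. The usual Gronwall route via It\^o on $\abs{X^\varepsilon}^2$ does not close either, since the jump contribution reintroduces $L^\varepsilon$. The semimartingale identity alone does not decouple $L^\varepsilon$ from $X^\varepsilon$, and no ``standard'' estimate fills this in.

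The paper's proof of $\tau_\infty=\infty$ proceeds by cases. If $g_\infty:=\lim_\ell g(\ell)=\infty$, a time-change concatenates the upcrossing parts of the excursions into a single $(b,\sigma)$-diffusion; on $\{\tau_\infty<\infty\}$ this diffusion would reach all levels $g(n\varepsilon)\to\infty$ in finite time, contradicting non-explosion. If $g_\infty<\infty$, one fixes $g_\infty-\varepsilon<x<y<g_\infty$ and notes that $X^\varepsilon$ must perform infinitely many \emph{continuous} upcrossings of $[x,y]$ before $\tau_\infty$; their durations are i.i.d.\ with law $\hittingTimeFromTo{x}{y}$, and the law of large numbers forces their sum to diverge. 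Both arguments exploit structural features (time-change and comparison with an unreflected diffusion, respectively i.i.d.\ excursion pieces) that your BDG--Gronwall sketch does not supply.
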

\begin{proof}
Indeed, one can argue by results \cite[V.9--11, V.17]{RogersWilliams87vol2} for classical diffusion SDEs with Lipschitz coefficients $(b,\sigma)$ by inductive construction on $\rightOpenStochasticInterval{ 0,\tau_n }$ where for $n\ge 1$,
\(
	\tau_n:= \inf\braces{t>\tau_{n-1}\mid  X^\varepsilon_{t-}=g(n\varepsilon) } = \tau^\varepsilon_{\varepsilon n}
\) 
with \(\tau_0:=0\).
Clearly $L^\varepsilon_t$ equals $L^\varepsilon_{\tau_{n-1}}$ for $t\in \rightOpenStochasticInterval{ \tau_{n-1}, \tau_n }$ and $L^\varepsilon_{\tau_{n}} \!=\! L^\varepsilon_{\tau_{n-1}} + \varepsilon$, while $X^\varepsilon_u = F(X^\varepsilon_{\tau_{n-1}}, (W_{\tau_{n-1}+s})_{s\ge 0})_{u - \tau_{n-1}}$ on $\rightOpenStochasticInterval{ \tau_{n-1},\tau_n }$ holds for a suitable functional representation $F$ of strong solutions to $(b,\sigma)$-diffusions \cite[Theorem~V.10.4]{RogersWilliams87vol2}.
Such construction extends to $\rightOpenStochasticInterval{ 0, \tau_\infty }$ for
$\tau_\infty:=\lim_n\tau_n$.

It suffices to show $\tau_\infty=\infty$ (a.s.).
To this end, let $g_\infty:= \lim_n g(n\varepsilon)\in \mathbb{R}\cup\{\infty\}$.
In the case $g_\infty<\infty$ , one can find $x,y\in \mathbb{R}$ with $g_\infty - \varepsilon < x < y < g_{\infty}$. 
By recurrence of $(b,\sigma)$-diffusions, we have (a.s.)\ finite times
$\tau_0^y:= \inf\{t > 0 \mid X^\varepsilon_t = y \}$,
$\tau_n^x:= \inf\{t > \tau_{n-1}^y \mid X^\varepsilon_t = x \}$,
$\tau_n^y:= \inf\{t > \tau_n^x \mid X^\varepsilon_t = y \}$, for $n \in \NN$.
The durations $\tau^y_n-\tau^x_n$, $n\in \NN$, for upcrossings of the interval $[x,y]$ are i.i.d., by the strong Markov property of the time-homogeneous diffusion. Moreover, $X^\varepsilon$ is continuous on all $\closedStochasticInterval{\tau^x_n, \tau^y_n}$.
By the law of large numbers,
\( \frac 1 n \sum_{i=1}^n \exp (- \lambda(\tau^y_i-\tau^x_i) )\) 
converges almost surely for $n\to\infty$ to  
the Laplace transform $\mathbb{E}_x[\exp(-  \lambda \hittingTimeTo{y})]$, $\lambda\ge 0$, 
 of the time $\hittingTimeTo{y}$ for hitting $y$ by the $(b,\sigma)$-diffusion process (started at $x$).
This expectation is strictly less than $1$ for $\lambda>0$, as $\hittingTimeTo{y} > 0$ $P_x$-a.s.\ for $y>x$,
whereas the limit of 
\( \frac 1 n \sum_{i=1}^n \exp (- \lambda(\tau^y_i-\tau^x_i) )\) 
equals $1$ on $\{\tau_\infty < \infty\}$, where $\lim_{i \to \infty} (\tau^y_i-\tau^x_i)=0$.
Hence $P[\tau_\infty<\infty]=0$.

If $g_\infty = \infty$,
let 
\(
	\tau_n':= \inf\braces{t>\tau_{n-1} \mid X^\varepsilon_{t-} \!=\! g((n-1)\varepsilon)}
\), for $n\ge 1$,
so that $\tau_{n-1} < \tau_n' \le \tau_{n}$ 
and $X^\varepsilon_{\tau_{n}'-} = g((n-1)\varepsilon)=X^{\varepsilon}_{({\tau_{n-1}})-}$.
Using the time change $\varphi_t:=\int_0^t \sum_{n=1}^\infty 1_{\rightOpenStochasticInterval{ \tau_n',\tau_n }} \diff u$ with 
inverse $s_t:=\inf\{u\mid \varphi_u>t\}$, we get  (cf.\ \cite[IV.30.10]{RogersWilliams87vol2}) that $X_t':= X^\varepsilon_{s_t}$, $t\ge0$, solves the SDE $\!\diff X'_t = b(X'_t) \diff t + \sigma(X'_t) \diff W'_t$, $X'_0=g(0)$, on $\rightOpenStochasticInterval{0,\varphi_\infty}$ for $\varphi_\infty:= \sup_t \varphi_t$, with respect to 
$W'_t=\int_0^{s_t} \sum_{n=1}^\infty 1_{\rightOpenStochasticInterval{ \tau_n', \tau_n }} \diff W_u$.  
We have $W'_t = B_{t \wedge \varphi_\infty}$ for some Brownian motion $B$ on $[0,\infty)$ 
by the Dambis-Dubins-Schwarz theorem, 
cf.\  \cite[Thm.~3.4.6, Prob.~3.4.7]{KaratzasShreveBook}.
So $X'$ solves the $(b,\sigma)$-diffusion SDE w.r.t.\ $B$ on $\rightOpenStochasticInterval{0, \varphi_\infty}$.
Consider a $(b,\sigma)$-diffusion $\tilde X$ w.r.t.\ $B$ on $[0,\infty)$. 
By the usual Gronwall argument for uniqueness of SDE solutions, we get $X' = \tilde X$ on all $\closedStochasticInterval{0, \varphi_{\tau_n}}$
and hence $X' = \tilde X$ on $\rightOpenStochasticInterval{0, \varphi_\infty}$.
In particular, $X'$ remains a.s.\ bounded on any finite time interval $\rightOpenStochasticInterval{0,T}$ with $T \le \varphi_\infty$.
However, in the event $\{\tau_\infty < \infty\} \subset \{\varphi_\infty < \infty\}$, we get from $X'_{\varphi_{\tau_n}} = g(n\varepsilon) \to \infty$ 
that $\sup_{t < \varphi_\infty} X'_t = \infty$.
Hence, we must have $\PP[\tau_\infty < \infty] = 0$.
\end{proof}

By \eqref{def: approx X} -- \eqref{def: approx tau}, we have  $\tau^\varepsilon_{0}=\tau^\varepsilon_{0-}=0$ and $\tau^\varepsilon_{\ell}=\tau^\varepsilon_{(k-1)\varepsilon}$ for $\ell\in [(k-1)\varepsilon,k\varepsilon)$ with $k\in \NN$, and $\tau^\varepsilon_{k\varepsilon}$ is the $k$-th jump time of $X^{\varepsilon}$ and $L^{\varepsilon}$  within period $(0,\infty)$.
For $\ell=k\varepsilon$, the approximating process $X^\varepsilon$ is a continuous $(b,\sigma)$-diffusion on stochastic intervals $\rightOpenStochasticInterval{ \tau^\varepsilon_{\ell-} , \tau^\varepsilon_\ell }$, and $X^\varepsilon_{\tau^\varepsilon_{\ell}} = X^\varepsilon_{\tau^\varepsilon_{(\ell-)}} - \varepsilon =g\paren{ L^\varepsilon_{\tau^\varepsilon_{(\ell-)}} } - \varepsilon = g(\ell - \varepsilon) - \varepsilon$.
For such $\ell=k\varepsilon$, we shall call $\tau^\varepsilon_\ell - \tau^\varepsilon_{\ell-}$ the  length of the ($k$-th) excursion of $X^\varepsilon$ away from the boundary.
Note that this excursion length is independent of 
$\mathcal{F}^\varepsilon_{\tau^\varepsilon_{(\ell-)}}$ and its (conditional) distribution is
\begin{equation}\label{eq:excursion length}
	\tau^\varepsilon_\ell - \tau^\varepsilon_{\ell-} \sim \hittingTimeTo{g(\ell)} \quad\text{ under }\PP_{g(\ell - \varepsilon) - \varepsilon}\,,
\end{equation}
 what is also denoted as $\tau^\varepsilon_\ell - \tau^\varepsilon_{\ell-} \stackrel{d}= \hittingTimeFromTo{g(\ell - \varepsilon) - \varepsilon}{g(\ell)}$.
The Laplace transform of first hitting times $\hittingTimeFromTo{x}{z}$ is well-known, see  e.g.~\cite[V.50]{RogersWilliams87vol2}:
for $x,z \in \RR$ and $\lambda > 0$,
\begin{equation}\label{eq: Lap transf hitting time}
	\EE\brackets[\big]{ e^{-\lambda \hittingTimeFromTo{x}{z}} } \equiv \EE_x\brackets[\big]{ e^{-\lambda \hittingTimeTo{z}} } 
		= \begin{cases}
			\Phi_{\lambda,-}(x) / \Phi_{\lambda,-}(z) & \text{if }x < z,
		\\	\Phi_{\lambda,+}(x) / \Phi_{\lambda,+}(z) & \text{if }x > z,
		\end{cases}
\end{equation}
where functions $\Phi_{\lambda,\pm}$ are uniquely determined up to a constant factor as the increasing ($\Phi_{\lambda,-}$) respectively decreasing ($\Phi_{\lambda,+}$) positive solutions $\Phi$ of the differential equation
\(
	\scG \Phi = \lambda \Phi
\)
with generator $\scG = \frac{1}{2} \sigma(x)^2\frac{\diff^2}{\diff x^2} + b(x) \frac{\diff}{\diff x}$ of the $(b,\sigma)$-diffusion.
Since we assume the boundary function $g$ to be non-decreasing, only $\Phi_{\lambda,-}$ is of interest for our purpose.

Due to  independence of Brownian increments over disjoint time intervals, the Laplace transform of the inverse local time can be calculated  from a sum of (independent) excursion lengths at (discrete) local times $\ell_n:= \varepsilon n$ as
\begin{align}
\notag
	\EE\brackets[\big]{ \exp\paren[\big]{-\lambda \tau^\varepsilon_\ell} } 
		&= \EE\brackets[\bigg]{ \exp\paren[\bigg]{ -\lambda \sum_{n=1}^{\floor{\ell / \varepsilon}} \paren[\big]{ \tau^\varepsilon_{\ell_n} - \tau^\varepsilon_{\ell_n -} } } }
= \prod_{n=1}^{\floor{\ell/\varepsilon}} \EE\brackets[\Big]{ \exp\paren[\Big]{-\lambda \paren[\big]{ \tau^\varepsilon_{\ell_n} - \tau^\varepsilon_{\ell_n -} } } }
\\\notag
		&= \prod_{n=1}^{\floor{\ell/\varepsilon}} \EE_{g(\ell_n - \varepsilon) - \varepsilon}\brackets[\big]{ \exp\paren[\big]{-\lambda \hittingTimeTo{g(\ell_n)}} }
= \prod_{n=1}^{\floor{\ell/\varepsilon}} \frac{ \Phi_{\lambda,-}\paren[\big]{ g(\ell_n - \varepsilon) - \varepsilon } }{ \Phi_{\lambda,-}\paren[\big]{ g(\ell_n) } } 
\\\label{eq: Laplace tau as sum of logarithms}
		&= \exp\paren[\Bigg]{ \sum_{n=1}^{\floor{\ell/\varepsilon}} \log\paren[\bigg]{ \frac{ \Phi_{\lambda,-}\paren[\big]{ g(\ell_n - \varepsilon) - \varepsilon } }{ \Phi_{\lambda,-}\paren[\big]{ g(\ell_n) } } } },
\end{align}
for $\ell \ge 0$ and $\lambda > 0$.
With $h_n(\xi):= \Phi_{\lambda,-}\paren[\big]{ g(\ell_n - \xi) - \xi }$, each summand in~\eqref{eq: Laplace tau as sum of logarithms} equals
\begin{align}
\notag
	\log h_n(\varepsilon) - \log h_n(0) 
		&= \int_0^\varepsilon \frac{h_n'(\xi)}{h_n(\xi)} \diff\xi 
		= -\int_0^\varepsilon \paren[\big]{ g'(\ell_n-\xi) + 1 } \frac{ \Phi_{\lambda,-}'\paren[\big]{ g(\ell_n - \xi) - \xi } }{ \Phi_{\lambda,-}\paren[\big]{g(\ell_n - \xi) - \xi} } \diff\xi 
\\
		&= -\int_{\ell_{n-1}}^{\ell_n} \paren[\big]{ g'(a) + 1 } \frac{ \Phi_{\lambda,-}'\paren[\big]{ g(a) + a - \ell_n } }{ \Phi_{\lambda,-}\paren[\big]{ g(a) + a - \ell_n } } \diff a \,.
\end{align}
Therefore, we obtain
\begin{equation} \label{eq: Laplace tau epsilon}
	\EE\brackets[\big]{ \exp\paren[\big]{ -\lambda \tau^\varepsilon_\ell } } 
		= \exp\paren[\bigg]{ -\int_0^{\varepsilon \floor{ \ell/\varepsilon }} \paren[\big]{ g'(a) + 1 } \frac{ \Phi_{\lambda,-}'\paren[\big]{ g(a) + a - \varepsilon\ceil{ a/\varepsilon } } }{ \Phi_{\lambda,-}\paren[\big]{ g(a) + a - \varepsilon\ceil{ a/\varepsilon } } } \diff a }.
\end{equation}
Intuitively, this already suggests the formula (\refeq{eq:Lap transf inv loc time}) when taking $\varepsilon \to 0$.
\begin{theorem} \label{thm: local time Laplace}
	The approximations $(X_t^\varepsilon, L_t^\varepsilon)_{t\ge 0}$ from \eqref{def: approx X}--\eqref{def: approx L} converge  uniformly in probability for $\varepsilon \to 0$ to a pair $(X_t,L_t)_{t\ge 0}$  of continuous adapted processes with non-decreasing $L$,
which is the unique strong  solution (globally on $[0,\infty)$) to the reflected SDE \eqref{eq: diffusion}--\eqref{eq: local time}.
	The inverse local time $\tau_\ell:= \inf\braces{ t>0 \mid L_t > \ell }$ has the Laplace transform
	\begin{equation}\label{eq:Lap transf inv loc time}
		\EE\brackets[\big]{ e^{-\lambda \tau_\ell} } = \exp\paren[\bigg]{ -\int_0^\ell \paren[\big]{ g'(a) + 1 } \frac{ \Phi_{\lambda,-}'\paren[\big]{ g(a) } }{ \Phi_{\lambda,-}\paren[\big]{ g(a) } } \diff a } \quad \text{for $\lambda > 0$, $\ell \ge 0$},
	\end{equation}
	 where $\Phi_{\lambda,-}$ is the (up to a constant factor) unique positive increasing solution of the differential equation $\scG \Phi = \lambda \Phi$, for $\scG$ denoting the generator of the $(b,\sigma)$-diffusion.
\end{theorem}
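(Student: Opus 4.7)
The plan is to establish ucp convergence of $(X^\varepsilon, L^\varepsilon)$ via the Kurtz–Protter stability theorem for SDEs with uniformly controlled variations, identify the weak limit as the unique solution of the reflected SDE \eqref{eq: diffusion}--\eqref{eq: local time}, and then pass to the limit $\varepsilon\downarrow 0$ in the finite-$\varepsilon$ identity \eqref{eq: Laplace tau epsilon} to obtain \eqref{eq:Lap transf inv loc time}.

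First I would establish tightness of $(X^\varepsilon, L^\varepsilon, W)_\varepsilon$ in the Skorokhod space $D([0,T],\RR^3)$ for each $T<\infty$. Since $L^\varepsilon$ is pure-jump non-decreasing with jumps of size $\varepsilon$ and $X^\varepsilon = g(0) - L^\varepsilon + \int_0^\cdot b(X^\varepsilon_s)\,ds + \int_0^\cdot \sigma(X^\varepsilon_s)\,dW_s$, tightness of $X^\varepsilon$ reduces to a uniform moment bound $\sup_\varepsilon \EE[L^\varepsilon_T]<\infty$. This follows from \eqref{eq:excursion length}: the number of excursions by time $T$ is controlled by a renewal-type argument using the i.i.d.\ structure of excursion durations and recurrence of the underlying diffusion. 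Vanishing jump sizes then force any weak limit of $L^\varepsilon$ to be continuous. Next, the monotonicity of $L^\varepsilon$ combined with this moment bound gives the UCV condition of \cite{KurtzProtter91} uniformly in $\varepsilon$. Together with the continuous driver $W$, the Kurtz–Protter stability theorem yields that along any weakly convergent subsequence $(X^\varepsilon, L^\varepsilon)\weakConvergenceTo(X, L)$, the limit $(X,L)$ solves $dX_t = b(X_t)\,dt + \sigma(X_t)\,dW_t - dL_t$ with $X_0=g(0)$. The inequality $X\le g(L)$ transfers from its pre-limit analogue $X^\varepsilon_{t-}\le g(L^\varepsilon_{t-})$ by continuity. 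For the reflection condition $dL$-a.e.\ $X = g(L)$, I would recast it as $\int_0^T (g(L_s) - X_s)\,dL_s = 0$: this identity holds exactly at the $\varepsilon$-level (since $L^\varepsilon$ only jumps when $X^\varepsilon_{s-} = g(L^\varepsilon_{s-})$), and passes to the limit by joint weak convergence and continuity of $g$.

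Once the limit is identified, pathwise uniqueness for \eqref{eq: diffusion}--\eqref{eq: local time} (by classical reflected-SDE theory) forces every subsequential limit to coincide with a single $(X,L)$, upgrading the convergence to convergence in probability; continuity of the limit paths then turns this into ucp convergence. For the Laplace transform, ucp convergence of $L^\varepsilon \to L$, with $L$ continuous and strictly increasing at $\tau_\ell$ for a.e.\ $\ell$, yields $\tau^\varepsilon_\ell \to \tau_\ell$ in probability, so bounded convergence gives $\EE[e^{-\lambda\tau^\varepsilon_\ell}]\to \EE[e^{-\lambda\tau_\ell}]$. On the other side of \eqref{eq: Laplace tau epsilon}, the integrand converges pointwise to $(g'(a)+1)\Phi_{\lambda,-}'(g(a))/\Phi_{\lambda,-}(g(a))$ since $\varepsilon\lceil a/\varepsilon\rceil\to 0$, and the upper bound $\varepsilon\lfloor \ell/\varepsilon\rfloor \to \ell$; uniform continuity of $\Phi_{\lambda,-}'/\Phi_{\lambda,-}$ on compact subsets of the range of $g$ together with boundedness of $g'$ on $[0,\ell]$ legitimises dominated convergence, producing \eqref{eq:Lap transf inv loc time}. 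Continuity in $\ell$ then extends the identity from a.e.\ $\ell$ to every $\ell\ge 0$.

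The step I expect to be the main obstacle is identifying the reflection condition in the limit, i.e.\ that $dL$ is supported on $\{X = g(L)\}$. A naive argument using $\indicator_{\{X^\varepsilon_{t-} = g(L^\varepsilon_{t-})\}}$ fails because indicator functions are unstable under weak convergence. The trick is to re-express the constraint as the bilinear integral identity $\int (g(L_s) - X_s)\,dL_s = 0$ with a non-negative \emph{continuous} integrand, so that joint weak convergence of $(X^\varepsilon, L^\varepsilon)$ combined with the Kurtz–Protter UCV framework suffice to transfer it from the $\varepsilon$-level to the limit.
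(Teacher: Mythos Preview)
Your overall architecture matches the paper's: tightness, UCV/Kurtz--Protter to pass the SDE to the limit, pathwise uniqueness to upgrade to convergence in probability, then take $\varepsilon\to 0$ in \eqref{eq: Laplace tau epsilon}. Two points deserve comment.

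\textbf{A gap in the tightness step.} Your claim that ``the number of excursions by time $T$ is controlled by a renewal-type argument using the i.i.d.\ structure of excursion durations'' does not hold as stated. By \eqref{eq:excursion length} the $k$-th excursion is distributed as $\hittingTimeFromTo{g((k-1)\varepsilon)-\varepsilon}{g(k\varepsilon)}$, so the durations are independent but \emph{not} identically distributed once $g$ is non-constant. More seriously, even in the constant-boundary Brownian case each excursion has \emph{infinite mean}, so a naive renewal bound of the form $\EE[N_T]\lesssim T/\EE[H]$ is vacuous; obtaining $\sup_\varepsilon \EE[L^\varepsilon_T]<\infty$ directly is not straightforward. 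The paper sidesteps moment bounds entirely: it verifies Aldous' tightness criterion by controlling the \emph{probability} that $L^{\varepsilon}$ makes $\ceil{\varepsilon_0/\varepsilon}$ jumps in a short interval, via a Chernoff-type bound using the Laplace transform \eqref{eq: Lap transf hitting time} and the asymptotics $\Phi_{\lambda,-}'(x)/\Phi_{\lambda,-}(x)\to\infty$ as $\lambda\to\infty$ (from Pitman--Yor). The UCV condition is then obtained in its stopping-time form from tightness alone, without first-moment control. Your route could be salvaged, but not by the argument you sketched.

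\textbf{A cleaner alternative for the reflection condition.} Your idea of encoding $\diff L_t=\indicator_{\{X_t=g(L_t)\}}\diff L_t$ as the vanishing of the non-negative integral $\int_0^T (g(L_s)-X_s)\,\diff L_s$ is a genuine simplification over the paper's approach. The paper approximates $\indicator_{\{x=g(\ell)\}}$ by continuous tent functions $h_\delta$, uses goodness of $(L^{\varepsilon_n})$ to pass $\int H^{\delta,n}_{s-}\,\diff L^{\varepsilon_n}_s$ to the limit, and then sends $\delta\downarrow 0$ by monotone convergence. Your bilinear form avoids the auxiliary $\delta$-layer: since $(x,\ell)\mapsto g(\ell)-x$ is continuous and $(L^{\varepsilon_n})$ is good, Kurtz--Protter transfers $\int (g(L^\varepsilon_{s-})-X^\varepsilon_{s-})\,\diff L^\varepsilon_s=0$ directly to the limit, and non-negativity of the integrand against $\diff L$ finishes it. This is correct and more economical.

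The remaining pieces (dominated convergence on the right of \eqref{eq: Laplace tau epsilon}, convergence of $\tau^\varepsilon_\ell$, the Yamada--Watanabe style argument for ucp) are essentially as in the paper.
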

\begin{proof}
Existence and uniqueness of $(X,L)$ is shown in \cref{lemma: approx weak convergence} below.
\Cref{lemma: approx ucp} gives uniform convergence in probability.
Using dominated convergence for the right-hand side of \cref{eq: Laplace tau epsilon}, we find
\(
	\lim_{\varepsilon \to 0} \EE\brackets[]{ e^{-\lambda \tau^\varepsilon_\ell} } = \exp\paren[\big]{ -\int_0^\ell \paren[]{ g'(a) + 1 } \frac{ \Phi_{\lambda,-}'\paren[]{ g(a) } }{ \Phi_{\lambda,-}\paren[]{ g(a) } } \diff a }.
\)
For the left-hand side, it suffices to prove weak convergence $\tau^\varepsilon_\ell \weakConvergenceTo \tau_\ell$ as $\varepsilon \to 0$ for all $\ell \ge 0$.
This is done in \cref{lemma: tau weak convergence} below.
\end{proof}

\begin{remark}\label{rmk: reflected-OU-difficult}
	Existence and uniqueness for $(X,L)$ 
 can also be concluded from classical results, cf.\ \cite[suitably extended to non-bounded domains]{DupuisIshii93},
 by considering the pair $(X,L)$ as a degenerate diffusion in $\RR^2$ with oblique  reflection in direction $(-1, +1)$ at a smooth boundary, see~\cref{fig:fixed boundary in X-L-space}. This uses an iteration argument involving the Skohorod-map and
yields another approximation by a sequence of continuous processes. Yet, these do not satisfy the target diffusive dynamics inside the domain, except at the limiting fixed point (unless $(b,\sigma)$ are constant).
 In contrast, $(X^\varepsilon, L^\varepsilon)$ adheres to the same dynamics as $(X,L)$ between jump times, cf.\ (\ref{eq: diffusion}) and (\ref{def: approx X}), 
is Markovian and has a natural interpretation. 
\end{remark}

\begin{remark}
\label{rmk:Opt liq problem}
	\newcommand{\baseS}{\bar S}
	\newcommand{\impactVolatility}{\hat\sigma}
	\newcommand{\correlationBW}{\rho}
	\newcommand{\BigO}{\mathcal{O}}
	An application example for \eqref{eq:Lap transf inv loc time} and  elastically reflected diffusions is the optimal execution for the sale of a financial asset position if liquidity is stochastic, see \cite{BechererBilarevFrentrup2018}.
	A large trader with adverse price impact  seeks to maximize expected proceeds from selling $\theta$ risky  assets in an illiquid market.
	His trading strategy $A$ (predictable, c\`adl\`ag, non-decreasing) affects the asset price
 $S_t = f(Y^A_t)\baseS_t$ via a volume impact process $\diff Y^A_t = -\beta Y^A_t \diff t + \impactVolatility \diff B_t - \diff A_t$ with 
	$\baseS_t = \scE(\sigma W)_t$ for an increasing function $f$, and Brownian motions $(B,W)$ with correlation $\correlationBW$.
	The gains to maximize in expectation are 
	\[
		G_T(A):= \int_0^T e^{-\delta t} f(Y^A_t) \baseS_t \diff A^c_t + \sum_{\substack{ 0\le t\le T \\ \Delta A_t\ne 0 }} e^{-\delta t} \baseS_t \int_0^{\Delta A_t} f(Y^A_{t-} - x) \diff x
		.
	\]
	The optimal strategy turns out to be the local time $L$ of a reflected Ornstein-Uhlenbeck process $X$ (with $b(x):= \correlationBW\sigma\impactVolatility-\beta x$ and  $\sigma(x) = \sigma >0$) at a suitable elastic boundary $g$, as in \eqref{eq: diffusion}--\eqref{eq: local time}, see \cite[Section~3]{BechererBilarevFrentrup2018}.
	After a change of measure argument, one can write the expected proceeds from such strategies as $\EE[G_\infty(L)] = \int_0^\theta f\paren[\big]{ g(\ell) } \EE\brackets[\big]{ e^{-\delta\tau_\ell} } \diff \ell$. To find the optimal free boundary $g$, one can then apply \eqref{eq:Lap transf inv loc time} to express the proceeds as a functional of the boundary $g$, and optimize over all possible boundaries by solving a calculus of variations problem. 
	This is key to the proof in \cite{BechererBilarevFrentrup2018}.
	The discrete local time $L^\varepsilon$ has a natural interpretation as step process which approximates the continuous optimal strategy $L$  by doing small block trades, as they would be realistic in an actual implementation, with identical (no-)action region.
	The approximation is  asymptotically optimal for the control problem. Indeed, straightforward calculations similar to the derivation of \eqref{eq: Laplace tau epsilon} show  that $L^\varepsilon$ is asymptotically optimal in first order, i.e.\ $\EE[G_\infty(L)] = \EE[G_\infty(L^\varepsilon)] + \BigO(\varepsilon)$.
\end{remark}

\section{Tightness and convergence}
\label{sect: reflection approximation proof}

To show convergence of $(\tau^\varepsilon_\ell)_{\varepsilon}$, we will prove that the pair of càdlàg processes $(X^\varepsilon, L^\varepsilon)$ forms a tight sequence in $\varepsilon\rightarrow 0$. Applying weak convergence theory for SDEs by Kurtz and Protter \cite{KurtzProtter96}, we show that any limit point (for $\varepsilon\rightarrow 0$) satisfies \eqref{eq: diffusion} and \eqref{eq: local time}. Uniqueness in law for solutions of \eqref{eq: diffusion} -- \eqref{eq: local time} will then allow to conclude \cref{thm: local time Laplace}.

Let $(\varepsilon_n)_{n\in\NN}$ be a sequence with $\varepsilon_n \to 0$ and consider the sequence $(X^{\varepsilon_n}, L^{\varepsilon_n})_n$.
To show tightness, we will apply the following
criterion due to Aldous.
\begin{proposition}[{%
\cite[Cor.\ to Thm.~16.10]{Billingsley99}%
}] \label{prop: Aldous tightness criterion}
	Let $(E,\abs{\cdot})$ be a separable Banach space.
	If a sequence $(Y^n)_{n \in \NN}$ of adapted, $E$-valued càdlàg processes satisfies the following two conditions, then it is tight.
	\begin{enumerate}[label=(\alph*)]
		\item \label{prop: Aldous tightness criterion -- part: tight jumps}
			The sequences $\paren[\big]{ J_T(Y^n) }_n$ and $\paren{ Y^n_0 }_n$ are tight (in $\RR$, resp.\ $E$) for any $T \in (0,\infty)$, with
\(
				J_T(Y^n) := \sup_{0 < t \le T} \abs[\big]{ Y^n_t - Y^n_{t-} }\,
			\)		
denoting the largest jump until time $T$.
			
		\item \label{prop: Aldous tightness criterion -- part: jump probability}
			For any $T \in (0,\infty)$ and $\varepsilon_0, \eta > 0$ there exist $\delta_0 > 0$ and $n_0 \in \NN$ such that for all $n \ge n_0$, all (discrete) $Y^n$-stopping times $\aldousStoppingTime \le T$ and all $\delta \in (0, \delta_0]$ we have
			\[
				\PP\brackets[\big]{ \abs{Y^n_{\aldousStoppingTime+\delta} - Y^n_\aldousStoppingTime} \ge \varepsilon_0 } \le \eta\,.
			\]

	\end{enumerate}
\end{proposition}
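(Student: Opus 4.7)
The proposition is Aldous' classical tightness criterion for càdlàg processes in a separable metric (here Banach) space, quoted as a corollary to Billingsley's Theorem~16.10. The plan is therefore to verify the two hypotheses of that underlying theorem, namely (i) compact containment, i.e.\ for every $\eta>0$ and every $T\in(0,\infty)$ there is a compact $K\subset E$ with $\PP\brackets[\big]{Y^n_t\in K\ForAll t\in [0,T]} \ge 1-\eta$ uniformly in $n$, and (ii) the Aldous modulus-of-continuity estimate, i.e.\ for every pair of stopping times $\aldousStoppingTime_n\le T$ and every $\delta_n\downarrow 0$, $\abs{Y^n_{\aldousStoppingTime_n+\delta_n}-Y^n_{\aldousStoppingTime_n}}\to 0$ in probability.

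\textbf{Step 1 (compact containment).} By part \ref{prop: Aldous tightness criterion -- part: tight jumps}, the initial values $\{Y^n_0\}$ are tight in $E$, so there is a compact $K_0$ with $\PP[Y^n_0\in K_0]\ge 1-\eta/3$. By part \ref{prop: Aldous tightness criterion -- part: jump probability} applied to the deterministic stopping times $\aldousStoppingTime \equiv k\delta_0$, a grid of mesh $\delta_0$ on $[0,T]$ gives tightness of $\{Y^n_{k\delta_0}\}_n$ for each $k$, and by a union bound on the at most $\lceil T/\delta_0\rceil$ grid points one can inflate $K_0$ to a compact $K_1$ containing the grid values with probability $\ge 1-\eta/2$ uniformly in $n$. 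To pass from the grid to all $t\in[0,T]$, apply \ref{prop: Aldous tightness criterion -- part: jump probability} once more to control displacements within each subinterval $[k\delta_0,(k+1)\delta_0]$, and combine with part \ref{prop: Aldous tightness criterion -- part: tight jumps}, which bounds the largest jump $J_T(Y^n)$ in probability, to cover the càdlàg path between grid points.

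\textbf{Step 2 (Aldous' modulus condition).} This is essentially the content of \ref{prop: Aldous tightness criterion -- part: jump probability}, but stated for arbitrary stopping times rather than discrete ones. Given $\aldousStoppingTime_n\le T$ and $\delta_n\downarrow 0$, approximate $\aldousStoppingTime_n$ from above by dyadic stopping times $\aldousStoppingTime_n^{(k)}:= 2^{-k}\lceil 2^k\aldousStoppingTime_n\rceil$; these are $Y^n$-stopping times taking finitely many values, so \ref{prop: Aldous tightness criterion -- part: jump probability} applies directly. Right-continuity of càdlàg paths and the jump control from \ref{prop: Aldous tightness criterion -- part: tight jumps} then let one pass to the limit $k\to\infty$ and collapse $\delta_n\to 0$, yielding $\PP\brackets[\big]{\abs{Y^n_{\aldousStoppingTime_n+\delta_n}-Y^n_{\aldousStoppingTime_n}}\ge \varepsilon_0}\to 0$.

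\textbf{Conclusion.} Feeding Steps~1 and~2 into Billingsley's Theorem~16.10 yields tightness of $(Y^n)$ in the Skorokhod space $D([0,T],E)$ for every $T>0$, hence tightness in $D([0,\infty),E)$ by a standard projective-limit argument.

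\textbf{Main obstacle.} The only genuinely delicate step is Step~1: part \ref{prop: Aldous tightness criterion -- part: jump probability} only bounds displacements over intervals of length at most $\delta_0$ (which is fixed given $\varepsilon_0,\eta$), so compact containment on $[0,T]$ has to be built by iterating $\lceil T/\delta_0\rceil$ times along a fixed grid; care must be taken that the compact set $K$ grows only by a factor depending on $T$ and $\eta$, not on $n$, and that the exceptional sets from each step accumulate controllably. Since all constants in \ref{prop: Aldous tightness criterion -- part: jump probability} are uniform in $n\ge n_0$ and in the choice of stopping time $\aldousStoppingTime\le T$, this accumulation is harmless, but it is the bookkeeping that makes the argument nontrivial.
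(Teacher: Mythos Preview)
The paper does not prove this proposition at all; it simply quotes it from Billingsley with a citation. So there is no ``paper's own proof'' to compare against, and your sketch is an attempt to supply what the authors took for granted.

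That said, your Step~1 has a genuine gap, and in fact the proposition as stated --- for a \emph{general} separable Banach space $E$ --- is false. Condition~(b) only controls the \emph{norm} of increments $\abs{Y^n_{\hat\tau+\delta}-Y^n_{\hat\tau}}$, so iterating along a grid tells you at best that $Y^n_{k\delta_0}$ lies in an $\varepsilon_0$-enlargement of a compact set with high probability. In infinite dimensions such an enlargement is bounded but not relatively compact, so you cannot manufacture the compact $K_1$ you need. Concretely, take $E=\ell^2$ and $Y^n_t := \min(t,1)\,e_n$ with $(e_n)$ the standard basis: then $Y^n_0=0$ is tight, $J_T(Y^n)=0$ is tight, and $\abs{Y^n_{\hat\tau+\delta}-Y^n_{\hat\tau}}\le\delta$ so condition~(b) holds trivially; yet $Y^n_1=e_n$ is not tight in $\ell^2$, hence $(Y^n)_n$ is not tight in $D([0,\infty),\ell^2)$. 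Billingsley's Theorem~16.10 and its corollary are stated for real-valued processes, where boundedness and relative compactness coincide; the paper's extension to a separable Banach space is a (harmless, since only $\RR$ and $\RR^2$ are used) over-generalisation.

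For $E=\RR^d$ your outline is essentially correct: Step~1 then reduces to showing $\sup_{t\le T}\abs{Y^n_t}$ is tight in $\RR$, which does follow from iterating condition~(b) along a grid plus the jump bound in~(a), and Step~2 is the standard dyadic-approximation argument. So the repair is simply to restrict the statement to finite-dimensional $E$, or to add a genuine compact-containment hypothesis in the infinite-dimensional case.
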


\noindent
To get tightness one needs to control both jump size and, regarding $(L^{\varepsilon}_n)_n$, the frequency of jumps simultaneously.
As we are considering processes with jumps of size $\pm \varepsilon_n\to 0$, so only the latter is not yet clear.
To this end, the next lemma provides a technical bound on $X^{\varepsilon_n}$, $L^{\varepsilon_n}$, while a second lemma constricts the probability that $X^{\varepsilon_n}$ (respectively $L^{\varepsilon_n}$) performs a number of $N_n$ jumps in a time interval of fixed length.

\begin{lemma}[Upper bound] \label{lemma: boundary upper bound}
	Fix a time horizon $T \in (0,\infty)$ and $\eta \in(0,1]$.
	Then there exists a constant $M \in \RR$ such that
	\(
		\PP\brackets{ \exists n : g(L^{\varepsilon_n}_T - \varepsilon_n) > M } \le \eta
	\),
	 with the domain of definition for the function $g$ being extended by $g(-x):= g(0)$ for $-x < 0$.
\end{lemma}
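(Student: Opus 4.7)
Plan: The idea is to exhibit a single a.s.\ finite random variable $Z$ that pathwise dominates $g(L^{\varepsilon_n}_T - \varepsilon_n)$ for every $n$; then $M$ can be chosen so that $\PP[Z > M] \le \eta$.

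First, I reduce to controlling $\sup_{s \in [0,T]} X^{\varepsilon_n}_s$. If $L^{\varepsilon_n}$ has no jump in $(0,T]$, then $g(L^{\varepsilon_n}_T - \varepsilon_n) = g(0)$; otherwise, at the last such jump time $u$ one has $X^{\varepsilon_n}_{u-} = g(L^{\varepsilon_n}_{u-}) = g(L^{\varepsilon_n}_T - \varepsilon_n)$, and the left limit $X^{\varepsilon_n}_{u-} = \lim_{s\uparrow u} X^{\varepsilon_n}_s$ is dominated by $\sup_{s\le T} X^{\varepsilon_n}_s$. Hence in either case,
\[
g(L^{\varepsilon_n}_T - \varepsilon_n) \le g(0) \vee \sup_{s\le T} X^{\varepsilon_n}_s.
\]

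The key step is a pathwise comparison $X^{\varepsilon_n}_t \le \bar X_t$ a.s.\ for all $t$, where $\bar X$ denotes the unreflected $(b,\sigma)$-diffusion started at $g(0)$ and driven by the same $W$. Setting $D := X^{\varepsilon_n} - \bar X$, one has $D_0 = -\varepsilon_n$ so $D^+_0 = 0$, each jump $\Delta D = -\varepsilon_n$ is downward, and $dD^c_s = [b(X^{\varepsilon_n}_s) - b(\bar X_s)]\diff s + [\sigma(X^{\varepsilon_n}_s) - \sigma(\bar X_s)]\diff W_s$. Ito--Tanaka applied to $x \mapsto x^+$ yields
\[
D^+_t = \int_0^t \indicator_{D_{s-}>0}\,dD^c_s + \tfrac12 L^0_t(D) + \sum_{0 < s\le t}\bigl[(D_{s-}-\varepsilon_n)^+ - D_{s-}^+\bigr].
\]
Each summand is $\le 0$, and the semimartingale local time $L^0_t(D)$ vanishes by the standard Yamada--Watanabe argument: Lipschitz continuity of $\sigma$ gives $(\sigma(X^{\varepsilon_n}_s) - \sigma(\bar X_s))^2 \le L^2 D_s^2$, which is $o(1)$ as $D\to 0$ in the occupation-time formula defining $L^0_t(D)$. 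Combined with Lipschitz continuity of $b$ in the drift of $D^c$, localization of the stochastic integral, and Gronwall's inequality starting from $\EE D^+_0 = 0$, this forces $\EE D^+_t = 0$ and hence $X^{\varepsilon_n}_t \le \bar X_t$ a.s.

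Combining the two steps, pathwise $\sup_n g(L^{\varepsilon_n}_T - \varepsilon_n) \le g(0)\vee\sup_{s\le T}\bar X_s =: Z$. Since $\bar X$ is a continuous $(b,\sigma)$-diffusion with Lipschitz coefficients on the compact interval $[0,T]$, standard SDE moment estimates give $\EE\sup_{s\le T}|\bar X_s|^2 < \infty$, so $Z < \infty$ a.s., and an $M \in \RR$ with $\PP[Z > M] \le \eta$ exists. The technical heart of the argument is the pathwise comparison: the singular pushing term $-dL^{\varepsilon_n}$ obstructs a direct application of classical comparison theorems like Ikeda--Watanabe, but the Ito--Tanaka approach combined with the vanishing local-time contribution (due to Lipschitz $\sigma$) handles this cleanly, and crucially yields a bound on the full sequence simultaneously from the single process $\bar X$.
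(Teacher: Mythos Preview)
Your proof is correct and follows the same overall strategy as the paper: bound $g(L^{\varepsilon_n}_T-\varepsilon_n)$ by $\sup_{s\le T}X^{\varepsilon_n}_s$ (via the last jump time), then dominate $X^{\varepsilon_n}$ pathwise by the unreflected $(b,\sigma)$-diffusion $\bar X$ started at $g(0)$, and finally use a.s.\ boundedness of $\bar X$ on $[0,T]$.

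The difference lies in how the comparison $X^{\varepsilon_n}\le\bar X$ is established. You go through It\^o--Tanaka for the jump semimartingale $D=X^{\varepsilon_n}-\bar X$, invoking the Yamada--Watanabe local-time argument and Gronwall. The paper instead exploits the piecewise-continuous structure directly: on each interval $[\tau^{\varepsilon_n}_{(k-1)\varepsilon_n},\tau^{\varepsilon_n}_{k\varepsilon_n})$ both processes are continuous $(b,\sigma)$-diffusions, so the classical comparison theorem \cite[Thm.~5.2.18]{KaratzasShreveBook} applies on each piece, and a short induction over $k$ (the jump only makes $X^{\varepsilon_n}$ smaller at the reset) gives the global inequality. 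Your closing remark that the pushing term ``obstructs a direct application of classical comparison theorems'' is thus slightly off: the piecewise application is in fact the more elementary route here. Your It\^o--Tanaka argument is heavier machinery but would generalize more readily to settings where the reflection is not purely discrete.
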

\begin{proof}

Consider a continuous $(b,\sigma)$-diffusion $Y$ that starts at time $t=0$ at $g(0)$. For $n\in \NN$ and $k=0,1,2,\ldots$, let  $\ell(n,k):= k\varepsilon_n$. By induction over $k$, using comparison for diffusion SDEs, cf.~\cite[Theorem~5.2.18]{KaratzasShreveBook},  one obtains that (a.s.) $X^{\varepsilon_n}_t\leq Y_t$ for $t\in [0, \tau^{\varepsilon_n}_{\ell(n,k)})$ for all $k\geq 1$, and hence $X^{\varepsilon_n} \leq Y$ on $[0,\infty)$ (a.s.) because $\lim_{k\rightarrow \infty} \tau^{\varepsilon_n}_{\ell(n,k)} = \infty$ for any $n$ by \cref{lemma: reflected approx SDE well-posed}. Hence, on the event $\{\exists n : g(L^{\varepsilon_n}_T - \varepsilon_n) > M\}$ we have $\sup_{t\in [0,T]}Y_{t} \geq M$, and hence $\hittingTimeFromTo{g(0)}{M} \le T$. Thus   
\(
		\PP\brackets{\exists n : g(L^{\varepsilon_n}_T - \varepsilon_n) > M} \le \PP\brackets[]{ \hittingTimeFromTo{g(0)}{M} \le T }\,.
\)
Now the claim follows since $\lim_{M \to \infty} \PP[\hittingTimeFromTo{g(0)}{M} \le T] = 0$.
\end{proof}

\begin{lemma}[Frequency of jumps] \label{lemma: jump frequency}
	Fix $T \in (0,\infty)$, $\varepsilon_0,\eta > 0$, and set $N_n:= \ceil{\varepsilon_0 / \varepsilon_n}$.
	Then there exists $\delta > 0$ and $n_0\in \NN$ such that for every bounded stopping time $\aldousStoppingTime \le T$ we have
	\(
		\PP\brackets[\big]{ J^{\varepsilon_n}_{\aldousStoppingTime,\delta} \ge N_n } \le \eta
	\)
	for all $n \ge n_0$, where $J^{\varepsilon_n}_{\aldousStoppingTime, \delta}:= \inf\{ k \mid L^{\varepsilon_n}_{\aldousStoppingTime} + k \varepsilon_n \ge L^{\varepsilon_n}_{\aldousStoppingTime + \delta} \}$ is the number of jumps of $X^{\varepsilon_n}$, respectively $L^{\varepsilon_n}$, in time $\leftOpenStochasticInterval{ \aldousStoppingTime,\aldousStoppingTime + \delta }$.
\end{lemma}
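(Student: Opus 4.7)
The plan is to decompose the event $\braces{J^{\varepsilon_n}_{\aldousStoppingTime,\delta} \ge N_n}$ as a first-passage event for a sum of independent excursion lengths, and to bound its probability via a Chernoff-type argument exploiting the hitting-time Laplace transform~\eqref{eq: Lap transf hitting time}.

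By the strong Markov property applied at $\aldousStoppingTime$ and then at each successive jump time $\tau_k$ of $L^{\varepsilon_n}$ after $\aldousStoppingTime$, one can rewrite $\braces{J^{\varepsilon_n}_{\aldousStoppingTime,\delta} \ge N_n} = \braces{\sum_{k=1}^{N_n} \Delta_k \le \delta}$, where $\Delta_k := \tau_k - \tau_{k-1}$ and $\tau_0 := \aldousStoppingTime$. Conditionally on $\scF_{\tau_{k-1}}$, each $\Delta_k$ is distributed as a $(b,\sigma)$-diffusion hitting time: $\Delta_1 \stackrel{d}{=} \hittingTimeFromTo{X^{\varepsilon_n}_{\aldousStoppingTime}}{g(L^{\varepsilon_n}_{\aldousStoppingTime})}$, while for $k\ge 2$ the diffusion starts at $g(L^{\varepsilon_n}_{\tau_{k-1}}-\varepsilon_n) - \varepsilon_n$ and must hit $g(L^{\varepsilon_n}_{\tau_{k-1}})$, traversing an interval of length at least $\varepsilon_n$ by monotonicity of $g$. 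Their Laplace transforms are given by~\eqref{eq: Lap transf hitting time}, and conditional independence across $k$ is immediate.

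Next, one restricts to a good event on which the boundary values stay bounded. Taking $\delta \le 1$ without loss of generality, \cref{lemma: boundary upper bound} applied with horizon $T+1$ and tolerance $\eta/2$ supplies $M<\infty$ with $\PP[B_n^c] \le \eta/2$ uniformly in $n$, for $B_n := \braces{g(L^{\varepsilon_n}_{T+1} - \varepsilon_n) \le M}$. On $B_n \cap \braces{J^{\varepsilon_n}_{\aldousStoppingTime,\delta} \ge N_n}$, every $g(L^{\varepsilon_n}_{\tau_k})$ for $k\le N_n$ lies in a compact interval $I := [g(0), M']$. Crucially, since $L^{\varepsilon_n}_{\aldousStoppingTime} + N_n \varepsilon_n$ is $\scF_{\aldousStoppingTime}$-measurable, this confinement is enforced by an $\scF_{\aldousStoppingTime}$-measurable event $C_n$ containing $B_n \cap \braces{J^{\varepsilon_n}_{\aldousStoppingTime,\delta} \ge N_n}$ for all $n$ large. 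Chernoff's inequality combined with iterated conditional expectations and~\eqref{eq: Lap transf hitting time} then yields, for any $\lambda>0$,
\[
	\PP\brackets[\big]{\braces{J^{\varepsilon_n}_{\aldousStoppingTime,\delta} \ge N_n} \cap C_n}
	\le \exp\paren[\big]{\lambda\delta - c(\lambda)(N_n - 1)\varepsilon_n},
\]
where $c(\lambda) := \inf_{z \in I} \Phi_{\lambda,-}'(z)/\Phi_{\lambda,-}(z) > 0$ (continuous and strictly positive on the compact $I$, as $\Phi_{\lambda,-}$ is $C^1$ and strictly increasing), and each per-excursion factor $e^{-c(\lambda)\varepsilon_n}$ arises from the length-$\varepsilon_n$ lower bound on each traversal. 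Since $(N_n-1)\varepsilon_n \to \varepsilon_0$, choosing $\lambda$ with $c(\lambda)\varepsilon_0 > \log(4/\eta)$ and then $\delta$ small enough to absorb the $e^{\lambda\delta}$ factor concludes $\PP[J^{\varepsilon_n}_{\aldousStoppingTime,\delta} \ge N_n] \le \eta$ for all $n \ge n_0$.

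The principal obstacle is verifying $c(\lambda) \to \infty$ as $\lambda \to \infty$, i.e., that the logarithmic derivative of the positive increasing solution $\Phi_{\lambda,-}$ of $\scG \Phi = \lambda\Phi$ can be made uniformly large on $I$ by choosing $\lambda$ large. This follows from standard Sturm–Liouville/WKB asymptotics (alternatively, from the Riccati equation $r' = 2\lambda/\sigma^2 - 2br/\sigma^2 - r^2$ satisfied by $r := \Phi_{\lambda,-}'/\Phi_{\lambda,-}$, which forces $r \sim \sqrt{2\lambda}/\sigma$ for $\lambda$ large), but must be handled with care. The other ingredients—strong Markov decomposition, boundary confinement via \cref{lemma: boundary upper bound}, and the Chernoff inequality—are routine.
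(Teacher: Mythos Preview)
Your argument follows the paper's closely: the same Chernoff bound on a sum of conditionally independent excursion lengths, the same restriction to a compact range of boundary values via \cref{lemma: boundary upper bound}, and the same exponential estimate governed by $c(\lambda)=\inf_{x\in I}\Phi_{\lambda,-}'(x)/\Phi_{\lambda,-}(x)$, with $(N_n-1)\varepsilon_n\to\varepsilon_0$. The only substantive divergence is in the step you correctly flag as the principal obstacle, namely $c(\lambda)\to\infty$ uniformly on~$I$. The paper does not use Riccati/WKB asymptotics; instead it invokes \cite[Theorem~1]{PitmanYor03} to identify $\psi^x(\lambda):=\tfrac12\,\Phi_{\lambda,-}'(x)/\Phi_{\lambda,-}(x)$ as the Laplace exponent of an occupation-time subordinator, so that $\exp\paren[\big]{-2\varepsilon_0\psi^x(\lambda)}=\EE_x\brackets[\big]{\exp\paren{-\lambda A^x(\kappa^x_{2\varepsilon_0})}}\to 0$ pointwise as $\lambda\to\infty$, with uniformity over the compact~$I$ supplied by Dini's theorem. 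Your Riccati route is legitimate and more self-contained, but the one-line heuristic ``forces $r\sim\sqrt{2\lambda}/\sigma$'' is not yet a proof: making it rigorous on all of~$I$ still requires a uniformity argument, and the cleanest one---monotonicity of $\lambda\mapsto\Phi_{\lambda,-}'(x)/\Phi_{\lambda,-}(x)$ (immediate from the hitting-time representation~\eqref{eq: Lap transf hitting time}) together with Dini---essentially reproduces the paper's mechanism. One minor technical point: your per-excursion bound integrates $r$ over $[g(\ell_k)-\varepsilon_n,\,g(\ell_k)]$, so $I$ must be enlarged slightly below $g(0)$; the paper sidesteps this by Taylor-expanding at $x_n=g(\ell_k)\in I$ instead.
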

\begin{proof}
	We will first find an estimate for the jump count probability for arbitrary but fixed $\delta>0$, $n\in \NN$, $N_n\in \NN$  and $\aldousStoppingTime \le T$. 
	Only in part 2) of the proof we will consider $(N_n)_{n\in \NN}$ as stated, to study the limit $n \to \infty$.
	More precisely, we will show in part 1) that, given $\scF_\aldousStoppingTime$, for every $\lambda>0$ there exist $k_{n,\lambda} \in \braces{ 0, 1, \ldots, N_n - 1 }$ s.t.\ for $x_n:= g\paren{L^{\varepsilon_n}_\aldousStoppingTime + \varepsilon_n k_{n,\lambda}}$,
	\begin{equation}\label{ineq: jump frequency -- fixed n}
		\PP\brackets[\big]{ J^{\varepsilon_n}_{\aldousStoppingTime, \delta} \ge N_n \bigm| \scF_\aldousStoppingTime } \le e^{\lambda \delta} \paren[\bigg]{ \frac{ \Phi_{\lambda,-}(x_n-\varepsilon_n) }{ \Phi_{\lambda,-}(x_n) } }^{N_n-1}
		.
	\end{equation}
	
	1) In this part, fix arbitrary $\delta > 0$, $n \in \NN$, $N_n\in \NN$ and $\hat \tau \le T$.
	We enumerate the jumps and estimate the sum of excursion lengths by $\delta$.
	Let $\ell_k:= L^{\varepsilon_n}_\aldousStoppingTime + k\varepsilon_n$ be the (discrete) local time at the $k$-th jump after time $\aldousStoppingTime$.
	If $X^{\varepsilon_n}$ has at least $N_n$ jumps in the interval $\leftOpenStochasticInterval{ \aldousStoppingTime, \aldousStoppingTime + \delta }$, it is doing at least $N_n-1$ complete excursions (cf.\ \eqref{eq:excursion length}),
 so that,  noting that $\tau^{\varepsilon_n}_{L^{\varepsilon_n}_{t}-\varepsilon_n}\le t <  \tau^{\varepsilon_n}_{L^{\varepsilon_n}_{t}}$ (for all $t\ge 0$) and $\ell_{N_n - 1} + \varepsilon_n \le L^{\varepsilon_n}_{\aldousStoppingTime + \delta}$,
we have 
	\[
		\delta 
			= (\aldousStoppingTime + \delta) - \aldousStoppingTime 
			\ge \tau^{\varepsilon_n}_{L^{\varepsilon_n}_{\aldousStoppingTime+\delta}-\varepsilon_n } - \tau^{\varepsilon_n}_{L^{\varepsilon_n}_\aldousStoppingTime}
			\ge \sum_{k=1}^{N_n-1} \paren[\big]{ \tau^{\varepsilon_n}_{\ell_k} - \tau^{\varepsilon_n}_{\ell_{k-1}} }
			\stackrel{d}= \sum_{k=1}^{N_n-1} H_k
	\]
	 with the last equality
being in distribution conditionally on $\mathcal{F}_{\aldousStoppingTime}$,
 for $H_k$ being conditionally independent and distributed as $ \hittingTimeFromTo{g(\ell_{k-1})-\varepsilon_n}{g(\ell_k)}$.
Clearly, $\ell_k$ is $\mathcal{F}_{\aldousStoppingTime}$-measurable.
	By the Laplace transform \eqref{eq: Lap transf hitting time} of $H_k$ and  the Markov inequality, we get for $\lambda > 0$ 
	\begin{align*}
		\MoveEqLeft
		\PP\brackets[\big]{ J^{\varepsilon_n}_{\aldousStoppingTime,\delta} \ge N_n \bigm| \scF_{\aldousStoppingTime}}
			\le \PP\brackets[\bigg]{ \sum_{k=1}^{N_n - 1} H_k \le \delta \biggm| \scF_{\aldousStoppingTime} }
			\le e^{\lambda\delta} \EE\brackets[\bigg]{ \exp\paren[\bigg]{ -\lambda \sum_{k=1}^{N_n - 1} H_k } \biggm| \scF_{\aldousStoppingTime} }
		\\
			&= e^{\lambda\delta} \prod_{k=1}^{N_n - 1} \EE\brackets[\Big]{ \exp\paren[\Big]{-\lambda \hittingTimeFromTo{g(\ell_{k-1}) - \varepsilon_n}{g(\ell_k)}} \Bigm| \scF_{\aldousStoppingTime} }
		\\ \displaybreak[0]
			&= e^{\lambda\delta} \prod_{k=1}^{N_n - 1} \frac{ \Phi_{\lambda,-}\paren[\big]{ g(\ell_{k-1}) - \varepsilon_n } }{ \Phi_{\lambda,-}\paren[\big]{ g(\ell_k) } }
			\le e^{\lambda \delta} \prod_{k=1}^{N_n - 1} \frac{ \Phi_{\lambda,-}\paren[\big]{ g(\ell_k) - \varepsilon_n } }{ \Phi_{\lambda,-}\paren[\big]{ g(\ell_k) } }
		\\
			&\le e^{\lambda\delta} \paren[\bigg]{ \max_{0 \le k < N_n} \frac{ \Phi_{\lambda,-}\paren[\big]{ g(\ell_k) - \varepsilon_n } }{ \Phi_{\lambda,-}\paren[\big]{ g(\ell_k) } } }^{N_n - 1}
			= \;e^{\lambda\delta} \paren[\bigg]{ \frac{ \Phi_{\lambda,-}\paren{ x_n - \varepsilon_n } }{ \Phi_{\lambda,-}\paren{ x_n } } }^{N_n - 1}
	\end{align*}
	where $x_n:= g(\ell_k)$ for the index $k = k_{n,\lambda}$ attaining the maximum.

	2) For given $\delta > 0$ and $\aldousStoppingTime \le T$, let us now consider the sequence $N_n= \ceil{\varepsilon_0 / \varepsilon_n}$, $n\in \NN$.
	To investigate the limit $n \to \infty$, first observe that by Taylor expansion
	\[
		\log \frac{ \Phi_{\lambda,-}(x - \varepsilon_n) }{ \Phi_{\lambda,-}(x) } = -\varepsilon_n \frac{ \Phi_{\lambda,-}'(x) }{ \Phi_{\lambda,-}(x) } + \varepsilon_n r(x,\varepsilon_n),
	\]
	where $r(\cdot,\varepsilon_n) \to 0$ converges uniformly on compacts for $\varepsilon_n \to 0$.
	Since $\aldousStoppingTime + \delta \leq T+\delta$ is bounded, \cref{lemma: boundary upper bound} yields a constant $M \in \RR$ such that
	\(
		\PP\brackets[\big]{ \exists n : x_n > M } \le \frac{\eta}{2}
	\)
	for the $x_n$ from above.
	On the event $\{ \forall n: x_n \in I \}$ with compact $I := [g(0), M]$, we have  uniform convergence of $r(x_n, \varepsilon_n)$ and thereby get 
	\begin{align*}
		\MoveEqLeft
		\hspace{-1em}
		\limsup_{n\to\infty} e^{\lambda\delta} \paren[\bigg]{ \frac{ \Phi_{\lambda,-}(x_n - \varepsilon_n) }{ \Phi_{\lambda,-}(x_n) } }^{N_n - 1}
		= \exp\paren[\bigg]{ \lambda\delta + \limsup_{n \to \infty}\, (N_n - 1)\log \frac{ \Phi_{\lambda,-}(x_n - \varepsilon_n) }{ \Phi_{\lambda,-}(x_n) } }
	\\
		&= \exp\paren[\bigg]{ \lambda\delta + \limsup_{n \to \infty}\, (N_n\varepsilon_n - \varepsilon_n) \paren[\Big]{ r(x_n, \varepsilon_n) - \frac{ \Phi_{\lambda,-}'(x_n) }{ \Phi_{\lambda,-}(x_n) } } }
	\\
		&\le \exp\paren[\bigg]{ \lambda\delta - \varepsilon_0 \inf_{x \in I} \frac{ \Phi_{\lambda,-}'(x) }{ \Phi_{\lambda,-}(x) } }
		= \sup_{x \in I} \exp\paren[\bigg]{\lambda\delta - \varepsilon_0 \frac{ \Phi_{\lambda,-}'(x) }{ \Phi_{\lambda,-}(x) } } .
	\end{align*} 
	By \cite[Theorem~1]{PitmanYor03},   $\psi^x(\lambda) := \frac{1}{2} \Phi_{\lambda,-}'(x) / \Phi_{\lambda,-}(x)$ is the Laplace exponent of $A^x(\kappa^x_\cdot)$, where $\kappa^x_\ell$ is the inverse local time at constant level $x$ of a $(b,\sigma)$-diffusion $Z^x$ starting at $x$,  and $A^x(t)$ is the occupation time 
	\(
		A^x(t) := \int_0^t \indicator_{\braces{ Z^x_s \le x }} \diff s\,.
	\)
	So we get for $\lambda \to \infty$ that $\exp\paren[\big]{-2 \varepsilon_0 \psi^x(\lambda)} = \EE_x\brackets[\big]{ \exp\paren[\big]{ -\lambda A^x(\kappa^x_{2 \varepsilon_0}) } } \to 0$.
	By compactness of $I$ and Dini's theorem there exists $\lambda = \lambda_{\varepsilon_0,\eta,M}$ such that for $\delta:= 1 / \lambda$ we have
	\begin{align}\label{eq:last step pf on number of jumps}
		\limsup_{n \to \infty} e^{\lambda\delta} \paren[\bigg]{ \frac{ \Phi_{\lambda,-}(x_n - \varepsilon_n) }{ \Phi_{\lambda,-}(x_n) } }^{N_n - 1}
			&\le e^{\lambda\delta} \sup_{x \in I} \exp\paren[\big]{ -2 \varepsilon_0 \psi^x(\lambda) } \le \frac{\eta}{2} 
	\end{align} 
	on the event $\{x_n \le M\text{ for all }n\}$. 
	By \cref{ineq: jump frequency -- fixed n} and $\PP[\exists n: x_n > M] \le \eta/2$, this completes the proof.
\end{proof}

Using the preceding two lemmas, we will first prove tightness of $(L^{\varepsilon_n})_n$ and of $(X^{\varepsilon_n})_n$  separately. 
Tightness of the pair $(X^{\varepsilon_n}, L^{\varepsilon_n})_n$ is handled afterwards.

\begin{lemma}[{Tightness of the local time approximations}]\label{lem: Le tight}
	The sequence $(L^{\varepsilon_n})_n$ of càdlàg processes defined by \labelcref{def: approx L,def: approx X} satisfies Aldous' criterion and thus is tight.
\end{lemma}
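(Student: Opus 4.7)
My plan is to verify both conditions of Aldous' tightness criterion (\cref{prop: Aldous tightness criterion}) for the $\RR$-valued càdlàg sequence $(L^{\varepsilon_n})_n$. Both parts will reduce essentially directly to \cref{lemma: jump frequency}, because $L^{\varepsilon_n}$ is non-decreasing and has only jumps of the uniform size $\varepsilon_n$.

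For part \ref{prop: Aldous tightness criterion -- part: tight jumps}, I would observe that by the construction in \labelcref{def: approx L} the initial value is $L^{\varepsilon_n}_0 = \varepsilon_n \to 0$ deterministically, and every jump of $L^{\varepsilon_n}$ has size exactly $\varepsilon_n$, so $J_T(L^{\varepsilon_n}) \le \varepsilon_n \to 0$. Thus both $(L^{\varepsilon_n}_0)_n$ and $(J_T(L^{\varepsilon_n}))_n$ are tight in $\RR$.

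For part \ref{prop: Aldous tightness criterion -- part: jump probability}, the key identity is that monotonicity of $L^{\varepsilon_n}$ combined with the uniform jump size gives, for any stopping time $\aldousStoppingTime\le T$ and $\delta>0$,
\[
\abs{L^{\varepsilon_n}_{\aldousStoppingTime+\delta}-L^{\varepsilon_n}_{\aldousStoppingTime}} \,=\, \varepsilon_n \, J^{\varepsilon_n}_{\aldousStoppingTime,\delta}\,.
\]
Given $\varepsilon_0,\eta>0$, setting $N_n := \ceil{\varepsilon_0/\varepsilon_n}$ as in \cref{lemma: jump frequency} and using that $J^{\varepsilon_n}_{\aldousStoppingTime,\delta}$ is integer-valued, this translates the event of interest into the inclusion $\{\abs{L^{\varepsilon_n}_{\aldousStoppingTime+\delta}-L^{\varepsilon_n}_{\aldousStoppingTime}}\ge \varepsilon_0\} \subseteq \{J^{\varepsilon_n}_{\aldousStoppingTime,\delta}\ge N_n\}$. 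I would then invoke \cref{lemma: jump frequency} to produce $\delta_0>0$ and $n_0\in\NN$ such that $\PP[J^{\varepsilon_n}_{\aldousStoppingTime,\delta_0}\ge N_n]\le \eta$ for all $n\ge n_0$ and all stopping times $\aldousStoppingTime\le T$, and then note that $\delta\mapsto J^{\varepsilon_n}_{\aldousStoppingTime,\delta}$ is non-decreasing in $\delta$, so that the same bound persists for every $\delta\in(0,\delta_0]$.

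I do not expect any genuine obstacle here: the substantive work (controlling the distribution of excursion counts through the Laplace transform of hitting times and the Pitman--Yor estimate on $\Phi_{\lambda,-}'/\Phi_{\lambda,-}$) has already been done in \cref{lemma: jump frequency}. The present lemma is effectively a one-step repackaging of that estimate into the Aldous framework, with the monotonicity of $L^{\varepsilon_n}$ and its uniform jump size $\varepsilon_n$ being precisely what makes the translation between local-time increments and jump counts immediate.
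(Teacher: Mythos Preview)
Your proposal is correct and follows essentially the same route as the paper's proof: part~\ref{prop: Aldous tightness criterion -- part: tight jumps} is immediate from $L^{\varepsilon_n}_0=\varepsilon_n$ and $J_T(L^{\varepsilon_n})\le\varepsilon_n$, and part~\ref{prop: Aldous tightness criterion -- part: jump probability} reduces to \cref{lemma: jump frequency} via the observation that a local-time increment of size $\ge\varepsilon_0$ forces at least $N_n=\ceil{\varepsilon_0/\varepsilon_n}$ jumps. Your explicit mention of the monotonicity of $\delta\mapsto J^{\varepsilon_n}_{\aldousStoppingTime,\delta}$ to pass from the single $\delta$ delivered by \cref{lemma: jump frequency} to all $\delta\in(0,\delta_0]$ is a detail the paper leaves implicit, but the argument is otherwise identical.
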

\begin{proof}
	Part~\ref{prop: Aldous tightness criterion -- part: tight jumps} of \cref{prop: Aldous tightness criterion} is clear, as the  initial value $L^{\varepsilon_n}_0 = \varepsilon_n$ is deterministic and $J_T(L^{\varepsilon_n}) \le \varepsilon_n$.
	For part~\ref{prop: Aldous tightness criterion -- part: jump probability}, consider $T,\eta,\varepsilon_0 > 0$ and any bounded $L^{\varepsilon_n}$-stopping time $\aldousStoppingTime \le T$.
	The event $\abs{L^{\varepsilon_n}_{\aldousStoppingTime+\delta} - L^{\varepsilon_n}_\aldousStoppingTime} \ge \varepsilon_0$ means that $L^{\varepsilon_n}$ performs at least $N_n := \ceil{ \varepsilon_0 / \varepsilon_n }$ jumps in the stochastic interval $\leftOpenStochasticInterval{\aldousStoppingTime, \aldousStoppingTime + \delta}$.
 \cref{lemma: jump frequency} yields some $n_0$ and $\delta_0 = \delta_0(\varepsilon_0)$ such that Aldous' criterion is satisfied for all $n \ge n_0$.
	Hence, $(L^{\varepsilon_n})_n$ is tight by \cref{prop: Aldous tightness criterion}.
\end{proof}
\noindent
Next we show boundedness of $(X^{\varepsilon_n})_n$, needed for \cref{lem: Xe tight} to prove tightness.

\begin{lemma}[Bounding the diffusion approximations] \label{lemma: Xepsilon bound}
	Let $T \in (0,\infty)$ and $\eta > 0$.
	Then there exists $M \in \RR$ such that $\PP\brackets{ \sup_{t \in [0,T]} \abs{ X^{\varepsilon_n}_t } > M } < \eta$ for all $n\in\NN$.
\end{lemma}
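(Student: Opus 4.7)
The plan is to first use the tightness of $(L^{\varepsilon_n})_n$ established in \cref{lem: Le tight} to bound the reflection local time uniformly on $[0,T]$ with high probability, and then to derive a standard $L^2$-estimate for the stopped diffusion approximation via Doob's maximal inequality and Gronwall, exploiting the linear growth of the Lipschitz coefficients $b$ and $\sigma$.

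First, \cref{lem: Le tight} yields tightness of the non-decreasing random variables $(L^{\varepsilon_n}_T)_n$ in $\RR$, so for the given $\eta > 0$ I can pick $R > 0$ with $\sup_n \PP\brackets{L^{\varepsilon_n}_T > R} \le \eta/2$. Define the stopping time $\tau^{n,R} := \inf \braces{t \ge 0 \mid L^{\varepsilon_n}_t > R}$; since jumps of $L^{\varepsilon_n}$ are of size $\varepsilon_n \le \varepsilon_1$, the cutoff bound $L^{\varepsilon_n}_{t \wedge \tau^{n,R}} \le R + \varepsilon_1$ holds uniformly in $n$ and $t$.

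Second, the stopped process satisfies
\begin{equation*}
	X^{\varepsilon_n}_{t \wedge \tau^{n,R}} = g(0) - L^{\varepsilon_n}_{t \wedge \tau^{n,R}} + \int_0^{t \wedge \tau^{n,R}} b(X^{\varepsilon_n}_s) \diff s + \int_0^{t \wedge \tau^{n,R}} \sigma(X^{\varepsilon_n}_s) \diff W_s .
\end{equation*}
Applying the triangle inequality, Doob's $L^2$-maximal inequality to the stochastic integral, Cauchy-Schwarz to the drift integral, and the linear-growth bounds $\abs{b(x)}^2 + \abs{\sigma(x)}^2 \le C(1 + \abs{x}^2)$ (consequences of Lipschitz continuity together with the values at $0$), one derives for $\phi_n(t) := \EE \brackets*{\sup_{s \le t \wedge \tau^{n,R}} \abs{X^{\varepsilon_n}_s}^2}$ an inequality of Gronwall form $\phi_n(t) \le C_1(R) + C_2 \int_0^t \phi_n(s) \diff s$, with constants $C_1(R), C_2$ independent of $n$. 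A preliminary localization by $\inf\braces{t \ge 0 \mid \abs{X^{\varepsilon_n}_t} > K}$, followed by $K \to \infty$ via monotone convergence, makes sure that $\phi_n(t)$ is a priori finite. Gronwall's inequality then yields $\phi_n(T) \le C_1(R) e^{C_2 T} =: C(R,T)$ uniformly in $n$.

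Finally, by Markov's inequality and the cutoff I split
\begin{equation*}
	\PP\brackets*{\sup_{t \le T} \abs{X^{\varepsilon_n}_t} > M} \le \PP\brackets*{\sup_{t \le T \wedge \tau^{n,R}} \abs{X^{\varepsilon_n}_t} > M} + \PP\brackets{L^{\varepsilon_n}_T > R} \le \frac{C(R,T)}{M^2} + \frac{\eta}{2},
\end{equation*}
so choosing $M$ with $C(R,T)/M^2 \le \eta/2$ completes the proof. The main subtlety is that a naive $L^2$-estimate applied directly to $X^{\varepsilon_n}$ would fail because the additive term $L^{\varepsilon_n}_T$ admits no a priori uniform $L^2$-bound; the stopping at $\tau^{n,R}$ is the key device for converting the already established tightness of $(L^{\varepsilon_n})_n$ into the uniform $L^2$-control required for Gronwall.
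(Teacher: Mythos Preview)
Your argument is correct (modulo the harmless imprecision that the sequence $(\varepsilon_n)$ is not assumed monotone, so ``$\varepsilon_n\le\varepsilon_1$'' should be replaced by ``$\varepsilon_n\le\sup_m\varepsilon_m$''), but the route is genuinely different from the paper's. The paper argues purely by pathwise comparison: for the upper bound it dominates each $X^{\varepsilon_n}$ by the unreflected $(b,\sigma)$-diffusion $Y$ started at $g(0)$ (this is essentially \cref{lemma: boundary upper bound}), and for the lower bound it constructs a single auxiliary process --- a $(b,\sigma)$-diffusion reflected by $(-\hat\varepsilon)$-jumps at the constant level $g(0)-\hat\varepsilon$ with $\hat\varepsilon=\sup_n\varepsilon_n$ --- which lies below every $X^{\varepsilon_n}$ by iterated comparison over jump intervals; the c\`adl\`ag property of this single bounding process then gives the uniform lower bound. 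Your approach instead leverages the already-established tightness of $(L^{\varepsilon_n})_n$ from \cref{lem: Le tight} to stop at a level of the local time and then runs a standard Doob--Gronwall $L^2$-estimate on the stopped equation. The paper's method is more probabilistic and self-contained (it does not rely on \cref{lem: Le tight}, only on \cref{lemma: boundary upper bound}), and the explicit comparison processes it builds recur in the proof of \cref{lem: Xe tight}. Your method is more analytic and arguably more routine once tightness of $(L^{\varepsilon_n})_n$ is in hand; it also yields a uniform second-moment bound on the stopped supremum, which is slightly more quantitative than what the paper extracts.
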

\begin{proof}
By \cref{lemma: boundary upper bound}, for every $n\in \NN$ the process $X^{\varepsilon_n}$ on $[0,T]$ is bounded from above by a constant $M$ with probability at least $1 -\eta/2$ . It remains to show that it is also bounded from below with high probability.
To this end, we will construct a process $Y$ that is a lower bound for all $X^{\varepsilon_n}$ and then argue for $Y$.

For $\hat\varepsilon:= \sup_n \varepsilon_n$  consider a $(b,\sigma)$-diffusion $Y$  which is discretely reflected by jumps of size $-\hat\varepsilon$ at a constant boundary $c := g(0)-\hat\varepsilon$, with $Y_0 = y:= g(0)-2\hat\varepsilon$. 
Such $Y$ is a special case of \eqref{def: approx X}--\eqref{def: approx L}, for a constant boundary function:
$\diff Y_t = b(Y_t)\diff t + \sigma(Y_t) \diff W_t - L^Y_t$ with $L^Y_t := \sum_{0\le s\le t} \Delta L^Y_t$ and $\Delta L^Y_t := \hat\varepsilon \indicator_{\{ Y_{t-} = c \}}$.
Let $\tau^Y_k:= \inf \{ t > 0 \mid L^Y_t > k\hat\varepsilon \}$ be the $k$-th hitting time of $Y$ at the boundary $c$. 
So on all $\rightOpenStochasticInterval{ \tau^Y_k, \tau^Y_{k+1} }$, $Y$ is a continuous $(b,\sigma)$-diffusion starting in $y$.
Now for fixed $n$, $\varepsilon:= \varepsilon_n$, note that $X^{\varepsilon}_{\tau^{\varepsilon}_{m \varepsilon}} = g((m-1) \varepsilon) - \varepsilon \ge c \ge Y_{\tau^{\varepsilon}_{m \varepsilon}}$ by monotonicity of $g$.
As $\tau^{\varepsilon}_{m \varepsilon} \to \infty$ for $m \to \infty$ by \cref{lemma: reflected approx SDE well-posed}, induction over the inverse (discrete) local times $\tau^{\varepsilon}_{m\varepsilon}$, $m \in \NN$, yields $X^{\varepsilon} \ge Y$ on $\closedStochasticInterval{ \tau^Y_k, \tau^Y_{k+1} }$ if $X^{\varepsilon}_{\tau^Y_k} \ge Y_{\tau^Y_k}$ by comparison results \cite[Thm.~5.2.18]{KaratzasShreveBook}. 
Since $X^{\varepsilon}_0 \ge Y_0$, the latter follows by induction over $k$.
As $\tau^Y_k \to \infty$ for $k \to \infty$ by \cref{lemma: reflected approx SDE well-posed}, we get $X^{\varepsilon_n} \ge Y$ on $[0,\infty)$ for all $n$.
So it suffices to show $\PP[ \inf_{t \in [0,T]} Y_t < -M ] < \eta/2$ for some $M$,
which directly follows from the c\`adl\`ag property of $Y$.
\end{proof}

\begin{lemma}[Tightness of the reflected diffusion approximations]\label{lem: Xe tight}
	The sequence $\paren{ X^{\varepsilon_n} }_n$ of càdlàg processes from \labelcref{def: approx L,def: approx X} satisfies Aldous' criterion and thus is tight.
\end{lemma}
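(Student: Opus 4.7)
The plan is to verify the two conditions of Aldous' criterion (Proposition 3.2) for the sequence $(X^{\varepsilon_n})_n$. Condition \ref{prop: Aldous tightness criterion -- part: tight jumps} is immediate: the initial value $X^{\varepsilon_n}_0 = g(0)-\varepsilon_n$ is deterministic (and convergent), and by construction every jump of $X^{\varepsilon_n}$ has size exactly $\varepsilon_n$, so $J_T(X^{\varepsilon_n}) \le \varepsilon_n \to 0$.

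For condition \ref{prop: Aldous tightness criterion -- part: jump probability}, fix $T,\varepsilon_0,\eta>0$ and a bounded stopping time $\aldousStoppingTime \le T$. The key idea is to use the SDE \eqref{def: approx X} to split the increment as
\[
	X^{\varepsilon_n}_{\aldousStoppingTime+\delta} - X^{\varepsilon_n}_\aldousStoppingTime
		= \int_\aldousStoppingTime^{\aldousStoppingTime+\delta} b(X^{\varepsilon_n}_s)\diff s
		+ \int_\aldousStoppingTime^{\aldousStoppingTime+\delta} \sigma(X^{\varepsilon_n}_s)\diff W_s
		- \paren[\big]{L^{\varepsilon_n}_{\aldousStoppingTime+\delta} - L^{\varepsilon_n}_\aldousStoppingTime}
\]
and to control each of the three summands, ensuring the probability that any one of them exceeds $\varepsilon_0/3$ in absolute value is at most $\eta/3$, for all $n$ large and all sufficiently small $\delta$.

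To handle the drift and martingale terms we first localize by the supremum of $\abs{X^{\varepsilon_n}}$. By \cref{lemma: Xepsilon bound}, choose $M$ such that the event $\Omega_M^n := \{\sup_{t\in[0,T+1]} \abs{X^{\varepsilon_n}_t} \le M\}$ has probability at least $1-\eta/6$ for every $n$. On $\Omega_M^n$, continuity of $b,\sigma$ gives a constant $K=K(M)$ with $\abs{b},\abs{\sigma}\le K$ along the path, so the drift term is bounded deterministically by $K\delta$, which is below $\varepsilon_0/3$ once $\delta_0$ is small. For the martingale term, apply It\^o's isometry to the process stopped at $\tau_M^n := \inf\{t: \abs{X^{\varepsilon_n}_t}>M\}$: the stopped integrand is bounded by $K$, hence $\EE[(\int_0^{(\aldousStoppingTime+\delta)\wedge\tau_M^n} \sigma\diff W - \int_0^{\aldousStoppingTime\wedge\tau_M^n}\sigma\diff W)^2] \le K^2\delta$, and a Chebyshev estimate combined with $\PP[(\Omega_M^n)^c]\le \eta/6$ gives
\[
	\PP\brackets[\big]{\abs[\big]{\textstyle\int_\aldousStoppingTime^{\aldousStoppingTime+\delta}\sigma(X^{\varepsilon_n}_s)\diff W_s} \ge \varepsilon_0/3} \le \eta/6 + 9K^2\delta/\varepsilon_0^2,
\]
which is below $\eta/3$ for $\delta_0$ small enough.

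The third summand is handled by \cref{lemma: jump frequency}: since $L^{\varepsilon_n}_{\aldousStoppingTime+\delta}-L^{\varepsilon_n}_\aldousStoppingTime = \varepsilon_n J^{\varepsilon_n}_{\aldousStoppingTime,\delta}$, the event $\{L^{\varepsilon_n}_{\aldousStoppingTime+\delta}-L^{\varepsilon_n}_\aldousStoppingTime \ge \varepsilon_0/3\}$ is contained in $\{J^{\varepsilon_n}_{\aldousStoppingTime,\delta}\ge N_n\}$ for $N_n:=\ceil{(\varepsilon_0/3)/\varepsilon_n}$, and \cref{lemma: jump frequency} applied with $\varepsilon_0/3$ and $\eta/3$ furnishes $\delta_0'>0$ and $n_0$ making this probability at most $\eta/3$ for all $\delta\le\delta_0'$ and $n\ge n_0$. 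Taking $\delta_0$ to be the minimum of the constraints from the three estimates and summing bounds yields $\PP[\abs{X^{\varepsilon_n}_{\aldousStoppingTime+\delta}-X^{\varepsilon_n}_\aldousStoppingTime}\ge\varepsilon_0] \le \eta$ for all $n\ge n_0$ and $\delta\le\delta_0$, verifying Aldous' criterion. The main substantive obstacle is the control of the local-time increment, which is exactly what \cref{lemma: jump frequency} provides; the drift/diffusion bound is a standard localization argument.
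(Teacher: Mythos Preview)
Your argument is correct and takes a genuinely different route from the paper's proof. The paper handles condition~\ref{prop: Aldous tightness criterion -- part: jump probability} by splitting into the two cases $X^{\varepsilon_n}_{\aldousStoppingTime+\delta} \le X^{\varepsilon_n}_{\aldousStoppingTime} - \varepsilon_0$ and $X^{\varepsilon_n}_{\aldousStoppingTime+\delta} \ge X^{\varepsilon_n}_{\aldousStoppingTime} + \varepsilon_0$, and for each case constructs an auxiliary (reflected or unreflected) $(b,\sigma)$-diffusion that bounds $X^{\varepsilon_n}_{\aldousStoppingTime+\cdot}$ from below or above via pathwise comparison; the downward case in particular requires a somewhat delicate down-crossing count combined with \cref{lemma: jump frequency} and hitting-time estimates over a finite cover of $[-M,M]$. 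You instead exploit the SDE \eqref{def: approx X} directly to decompose the increment into drift, martingale, and local-time pieces, localize via \cref{lemma: Xepsilon bound} so that $b$ and $\sigma$ are bounded on the relevant path segment, and then control the three pieces by the trivial bound $K\delta$, It\^o isometry plus Chebyshev, and \cref{lemma: jump frequency}, respectively. Your approach is shorter and more modular: it makes transparent that once \cref{lemma: jump frequency} has been established (which already yields tightness of $(L^{\varepsilon_n})_n$ in \cref{lem: Le tight}) and once \cref{lemma: Xepsilon bound} gives uniform-in-$n$ boundedness in probability, tightness of $(X^{\varepsilon_n})_n$ follows by entirely standard SDE estimates, without any further comparison constructions. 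The paper's approach, by contrast, is more geometric and avoids moment estimates on stochastic integrals, but at the cost of building and analyzing auxiliary reflected processes. One minor point worth making explicit in your write-up: the bound from \cref{lemma: jump frequency} is monotone in $\delta$ (since $J^{\varepsilon_n}_{\aldousStoppingTime,\delta'} \le J^{\varepsilon_n}_{\aldousStoppingTime,\delta}$ for $\delta' \le \delta$), so the single $\delta$ it produces indeed serves as the $\delta_0'$ required by Aldous' criterion.
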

\begin{proof}
Condition~\ref{prop: Aldous tightness criterion -- part: tight jumps} of \cref{prop: Aldous tightness criterion} holds.
To verify part \ref{prop: Aldous tightness criterion -- part: jump probability}, let $\eta > 0$, $T \in (0,\infty)$, and $\aldousStoppingTime \leq T$ be a stopping time.
By \cref{lemma: Xepsilon bound},   $|X^{\varepsilon_n}_{\aldousStoppingTime}|$
is  with a probability of  at least $1-\eta/4$ bounded by some constant $M$  (not depending on $n$ and $\aldousStoppingTime$).
Let us consider the events $\{ X^{\varepsilon_n}_{\aldousStoppingTime+\delta} \le X^{\varepsilon_n}_\aldousStoppingTime - \varepsilon_0 \} \cup \{ X^{\varepsilon_n}_{\aldousStoppingTime+\delta} \ge X^{\varepsilon_n}_\aldousStoppingTime + \varepsilon_0 \} = \{ \abs{ X^{\varepsilon_n}_{\aldousStoppingTime+\delta} - X^{\varepsilon_n}_{\aldousStoppingTime} } \ge \varepsilon_0 \}$ separately.

1) First consider $\{ X^{\varepsilon_n}_{\aldousStoppingTime+\delta} \le X^{\varepsilon_n}_\aldousStoppingTime - \varepsilon_0 \}$.
For $\xi:= X^{\varepsilon_n}_{\aldousStoppingTime}$  we construct a reflected process $Y^\xi$ such that  $Y^\xi_t\le X^{\varepsilon_n}_{\aldousStoppingTime+t}$ for all $t\ge 0$.
We then estimate $\PP[ X^{\varepsilon_n}_{\aldousStoppingTime+\delta} \le X^{\varepsilon_n}_{\aldousStoppingTime} - \varepsilon_0 ]$ by means of $\PP[ Y^x_\delta \le x - \varepsilon_0 ]$ in \eqref{eq:displ of X bounded by Yx}, uniformly for all $n$ large enough.
We estimate the latter in \eqref{ineq: Y escape via N downcrossings} using the probability of a down-crossing in time $\delta$ of intervals $[x-\varepsilon_0, x-2\hat\varepsilon]$ by a continuous diffusion.
Covering $\bigcup_x [x-\varepsilon_0, x-2\hat\varepsilon]$ by finitely many intervals $[y_k, y_{k+1}]$ in \eqref{ineq: X escape down probability} then allows us to choose $\delta > 0$ sufficiently small.

To this end, choose $\hat\varepsilon \le \varepsilon_0 / 4$ and $n$ large enough such that $\varepsilon_n \le \hat\varepsilon$, and let $(Y^\xi_t)_{t\ge 0}$ be the $(b,\sigma)$-diffusion w.r.t.\ the Brownian motion $(W_{\aldousStoppingTime+t} - W_{\aldousStoppingTime})_{t\geq 0}$ with $Y^\xi_0 = \xi - 2\hat\varepsilon$, which is discretely reflected by jumps of size $-\hat\varepsilon$ at a constant boundary at level $\xi - \hat\varepsilon$.
More precisely, $\diff Y^\xi_t = b(Y^\xi_t)\diff t + \sigma(Y^\xi_t)\diff W_{\aldousStoppingTime+t} - K^\xi_t$ with (discrete) local time $K^\xi_t:= \sum_{0\le s\le t} \Delta K^\xi_s$ for $\Delta K^\xi_t:= \hat\varepsilon \indicator_{\{ Y^\xi_{t-} = \xi-\hat\varepsilon \}}$. Global existence and  uniqueness of $(Y^\xi, K^\xi)$ follows from proof of \cref{lemma: reflected approx SDE well-posed}.
By comparison arguments and induction as in the proof of \cref{lemma: Xepsilon bound}, one verifies $Y^\xi_t \le X^{\varepsilon_n}_{\aldousStoppingTime + t}$ for $t \in [0,\infty)$.
Indeed, \cite[Theorem~5.2.18]{KaratzasShreveBook} gives $Y^\xi_\cdot \le X^{\varepsilon_n}_{\aldousStoppingTime + \cdot}$ on $\rightOpenStochasticInterval{0,\tau_1}$ until the first jump of either $Y^\xi_\cdot$ or $X^{\varepsilon_n}_{\aldousStoppingTime + \cdot}$ at time $\tau_1 > 0$. 
If only $Y^\xi$ jumps, we have $Y^\xi_{\tau_1} = Y^\xi_{(\tau_1)-} - \hat\varepsilon \le X^{\varepsilon_n}_{(\tau_1)-} - \hat\varepsilon = X^{\varepsilon_n}_{\tau_1}- \hat\varepsilon$, but if $X^{\varepsilon_n}_{\aldousStoppingTime + \cdot}$ jumps, we have $X^{\varepsilon_n}_{\aldousStoppingTime + \tau_1} = g(L^{\varepsilon_n}_{(\aldousStoppingTime + \tau_1)-}) - \varepsilon_n \ge g(L^{\varepsilon_n}_{\aldousStoppingTime}) - \varepsilon_n = \xi \ge Y^\xi_{\tau_1}$.
Now $Y^\xi_{\tau_1} \le X^{\varepsilon_n}_{\aldousStoppingTime + \tau_1}$, so we get $Y^\xi_\cdot \le X^{\varepsilon_n}_{\aldousStoppingTime + \cdot}$ on $\rightOpenStochasticInterval{\tau_k,\tau_{k+1}}$ by induction for all jump times $\tau_k$ of $(Y^\xi_\cdot, X^{\varepsilon_n}_{\aldousStoppingTime + \cdot})$.

Using $Y^\xi_\delta \leq X^{\varepsilon_n}_{\aldousStoppingTime + \delta}$ and the strong Markov property of $Y^\xi$ w.r.t.\ $(\mathcal{F}_{\aldousStoppingTime + t})_{t\ge 0}$, we get 
\begin{align}\label{eq:displ of X bounded by Yx}
	\PP\brackets[\big]{ X^{\varepsilon_n}_{\aldousStoppingTime + \delta} \le X^{\varepsilon_n}_{\aldousStoppingTime} - \varepsilon_0 , \abs{X^{\varepsilon_n}_{\aldousStoppingTime}} \le M }
&\le \sup_{-M \le x \le M} \PP[ Y^x_\delta \le x - \varepsilon_0 ]\,.
\end{align}
By construction $Y^\xi$ depends on $n$ and $\tau$ (through $\xi$), while the right-hand side of \eqref{eq:displ of X bounded by Yx} does not. Thus one only needs to bound the probability of an $(\varepsilon_0 - 2\hat\varepsilon)$-displacement of  diffusions $Y^x$ with starting points $x-2\hat\varepsilon$  from a compact set, which are reflected (by ($-\hat\varepsilon$)-jumps) at constant boundaries $x-\hat\varepsilon$.
By the arguments in the proof of \cref{lemma: jump frequency} (here applied for $Y^x$ which is reflected at a constant boundary), for $\delta=\delta_0 > 0$ there exists $N\in \NN$ with the following property: for every $x\in [-M,M]$, the number $J^x_\delta:= \inf \{ k \mid k \hat\varepsilon \ge K^x_\delta \}$  of jumps  of $Y^x$  until time $\delta$ is bounded by $N-1$ with probability at least $1-\eta/8$.

Indeed, by \eqref{ineq: jump frequency -- fixed n}, fixing $\delta > 0$, $\lambda:= 1/\delta$ one gets for any $x$ 
that $\PP[J^x_\delta \ge  \ceil{N(x)}] \le \eta / 8$ where $N(x) := 1 + \paren[\big]{\log(\eta/8) - 1} / \paren[\big]{ \log \Phi_{\lambda,-}(x-\hat\varepsilon) - \log \Phi_{\lambda,-}(x) }\in \RR$.
Compactness of $[-M,M]$ and continuity of $N(x)$ gives $N:= \ceil{ \sup_{x \in [-M,M]} N(x) } < \infty$.
Hence, 
\begin{equation} \label{ineq: Y escape via N downcrossings}
	\sup_{x \in [-M,M]} \PP[ Y^x_\delta \le x - \varepsilon_0, J^x_\delta \le N-1 ] \le N \sup_{x \in [-M,M]} \PP[ \hittingTimeFromTo{x - 2\hat\varepsilon}{x - \varepsilon_0} \le \delta ],
\end{equation}
since for the event under consideration, the process $Y^x$ would have to move at least once (in at most $N$ occasions)
continuously from $x - 2\hat\varepsilon$ to $x - \varepsilon_0$.
Let $d:= (\varepsilon_0 - 2\hat\varepsilon)/2 \ge \varepsilon_0/4 > 0$, $K:= \floor{2M / d}$ and $y_k:= kd - M$.
For $x \in [y_k, y_{k+1}]$, we have $\hittingTimeFromTo{y_{k-2}}{y_{k-2}-d} \le \hittingTimeFromTo{x-\varepsilon_0}{x-2\hat\varepsilon}$ since $[y_{k-2}-d,y_{k-2}] \subset [x-\varepsilon_0, x-2\hat\varepsilon]$, so by $[-M,M] \subset [y_0,y_{K+1}]$ we get
\begin{align}
\notag
	\MoveEqLeft \PP\brackets[\big]{ \hittingTimeFromTo{X^{\varepsilon_n}_\aldousStoppingTime - \varepsilon_n}{X^{\varepsilon_n} - \varepsilon_0} \le \delta, \abs{X^{\varepsilon_n}_\aldousStoppingTime} \le M }
		\le \eta/8 + N \sup_{x \in [-M,M]} \PP[ \hittingTimeFromTo{x - 2\hat\varepsilon}{x - \varepsilon_0} \le \delta ]
\\ \notag
		&= \eta/8 + N\max_{k=0,\dots,K} \sup_{x \in [kd-M, (k+1)d-M]} \PP\brackets[\big]{ \hittingTimeFromTo{x - 2\hat \varepsilon}{x - \varepsilon_0} \le \delta }
\\ \label{ineq: X escape down probability}
		&\le \eta/8 + N\max_{k=-2,\dots,K} \PP\brackets[\big]{ \hittingTimeFromTo{y_k}{y_k - d} \le \delta } \,.
\end{align}
For a sufficiently small $\delta = \delta_1 \in (0,\delta_0]$ the right-hand side of~\eqref{ineq: X escape down probability} can be made smaller than $\eta / 4$.
The above holds for all $n$ such that $\varepsilon_n\le \hat \varepsilon$, meaning that there is some $n_0$ such that is holds for all $n\ge n_0$.
Note that $\delta_1$ only depends on $T$ (via $M$ and $K$) and on $n_0$ but not on $n$.
Hence, for all $\delta \in (0, \delta_1]$, all $n \geq n_0$ and all $\aldousStoppingTime \le T$ we have
\begin{equation} \label{ineq: X escape below}
	\PP\brackets{ X^{\varepsilon_n}_{\aldousStoppingTime+\delta} \le X^{\varepsilon_n}_\aldousStoppingTime - \varepsilon_0 } \le \frac{\eta}{2}\,.
\end{equation}

2) For the alternative second case $X^{\varepsilon_n}_{\aldousStoppingTime+\delta} \ge X^{\varepsilon_n}_\aldousStoppingTime + \varepsilon_0$,
consider the solution $(Y_t)_{t \ge \aldousStoppingTime}$ on $\rightOpenStochasticInterval{\aldousStoppingTime, \infty}$ of $\diff Y_t = b(Y_t) \diff t + \sigma(Y_t) \diff W_t$ with $Y_\aldousStoppingTime = X^{\varepsilon_n}_\aldousStoppingTime$.
Using comparison results for continuous diffusions \cite[Theorem~5.2.18]{KaratzasShreveBook} inductively over times $\rightOpenStochasticInterval{\tau^{\varepsilon_n}_{(k-1)\varepsilon_n}, \tau^{\varepsilon_n}_{k\varepsilon_n}}$, we find $Y_t \ge X^{\varepsilon_n}_t$ for all $t \in \rightOpenStochasticInterval{\aldousStoppingTime, \infty}$, a.s.
Hence, arguing like in the previous case
\begin{align}
	\notag
	\PP\brackets[\big]{ X^{\varepsilon_n}_{\aldousStoppingTime+\delta} \ge X^{\varepsilon_n}_\aldousStoppingTime + \varepsilon_0, \abs{X^{\varepsilon_n}_{\aldousStoppingTime}} \le M }
	&\le \PP\brackets[\big]{ Y_{\aldousStoppingTime+\delta} \ge Y_\aldousStoppingTime + \varepsilon_0, \abs{Y_{\aldousStoppingTime}} \le M }
\\ 
 \label{ineq: X escape up probability}
	&\le \sup_{-M \le y \le M}\PP\brackets[\big]{ \hittingTimeFromTo{y}{y + \varepsilon_0} \le \delta }
	.
\end{align}
As in \eqref{ineq: X escape down probability} we find a $\delta_2 > 0$ such that for all $\delta \in (0,\delta_2]$ the right  side of \eqref{ineq: X escape up probability} is bounded by $\eta/4$.
Hence we have $\PP[ X^{\varepsilon_n}_{\aldousStoppingTime+\delta} \ge X^{\varepsilon_n}_\aldousStoppingTime + \varepsilon_0 ] \le \eta/2$, so with \eqref{ineq: X escape below}, \cref{prop: Aldous tightness criterion} applies.
\end{proof}

\noindent
Now, to prove joint tightness of $(X^{\varepsilon_n}, L^{\varepsilon_n})_n$, we can utilize the fact that both processes satisfy Aldous' criterion and that their jump times and jump magnitudes are identical.

\begin{lemma}[Tightness of joint approximations]\label{lem: pair (Xe Le) tight}
	The sequence $(X^{\varepsilon_n}, L^{\varepsilon_n})_n$ of càdlàg $\RR^2$-valued processes defined by \labelcref{def: approx L,def: approx X} is tight.
\end{lemma}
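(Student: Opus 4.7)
My plan is to verify Aldous' tightness criterion (\cref{prop: Aldous tightness criterion}) directly for the $\RR^2$-valued c\`adl\`ag process $Y^n := (X^{\varepsilon_n}, L^{\varepsilon_n})$, endowed with the maximum norm $\norm{(x,\ell)}_\infty := \max(\abs{x}, \abs{\ell})$, which turns $\RR^2$ into a separable Banach space. Since the marginal tightness of $(X^{\varepsilon_n})_n$ and of $(L^{\varepsilon_n})_n$ has already been established in \cref{lem: Xe tight,lem: Le tight}, the task reduces to lifting these marginal results to the pair by a union bound. The key structural observation making this lifting painless is that $X^{\varepsilon_n}$ and $L^{\varepsilon_n}$ share exactly the same jump instants (namely those $t$ with $X^{\varepsilon_n}_{t-} = g(L^{\varepsilon_n}_{t-})$) and both jump by magnitude $\varepsilon_n$, so in particular the joint sup-norm jump size is $\varepsilon_n$.

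For part \ref{prop: Aldous tightness criterion -- part: tight jumps} of \cref{prop: Aldous tightness criterion}, the initial values $Y^n_0 = \paren{g(0) - \varepsilon_n,\, \varepsilon_n}$ are deterministic and convergent to $(g(0), 0)$, hence tight, and $J_T(Y^n) = \varepsilon_n \to 0$ is trivially tight in $\RR$. For part \ref{prop: Aldous tightness criterion -- part: jump probability}, fix $T, \varepsilon_0, \eta > 0$ and a bounded $Y^n$-stopping time $\aldousStoppingTime \le T$, which is in particular an $(\scF_t)$-stopping time since $Y^n$ is $(\scF_t)$-adapted. Using the inclusion
\[
	\braces[\big]{ \norm{Y^n_{\aldousStoppingTime+\delta} - Y^n_\aldousStoppingTime}_\infty \ge \varepsilon_0 }
	\subset \braces[\big]{ \abs{X^{\varepsilon_n}_{\aldousStoppingTime+\delta} - X^{\varepsilon_n}_\aldousStoppingTime} \ge \varepsilon_0 }
	\cup \braces[\big]{ \abs{L^{\varepsilon_n}_{\aldousStoppingTime+\delta} - L^{\varepsilon_n}_\aldousStoppingTime} \ge \varepsilon_0 }
\]
together with \cref{lem: Xe tight,lem: Le tight} applied with threshold $\eta/2$ in place of $\eta$, I obtain some $\delta_0 > 0$ and $n_0 \in \NN$ (the minimum and maximum, respectively, of those produced by the two marginal lemmas) so that each of the two probabilities on the right is at most $\eta/2$ for every $\delta \in (0,\delta_0]$ and $n \ge n_0$. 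Summing then gives the required bound $\PP\brackets{ \norm{Y^n_{\aldousStoppingTime+\delta} - Y^n_\aldousStoppingTime}_\infty \ge \varepsilon_0 } \le \eta$.

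Since the preceding two lemmas contain all the real work, I do not foresee any genuine obstacle in the present step; it is in effect a bookkeeping argument showing that joint Aldous tightness follows from coordinatewise Aldous tightness when both processes are adapted to a common filtration. The only minor subtlety is that \cref{prop: Aldous tightness criterion} is phrased in terms of $Y^n$-stopping times, whereas the marginal lemmas were proved for $(\scF_t)$-stopping times; this is harmless because the former class is contained in the latter. Once condition \ref{prop: Aldous tightness criterion -- part: jump probability} is verified, \cref{prop: Aldous tightness criterion} immediately yields tightness of $(X^{\varepsilon_n}, L^{\varepsilon_n})_n$ in the Skorokhod space $D([0,\infty), \RR^2)$.
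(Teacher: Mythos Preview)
Your proof is correct and follows essentially the same route as the paper: verify Aldous' criterion for the pair by reducing to the already-established marginal Aldous bounds via a union bound, exploiting that $X^{\varepsilon_n}$ and $L^{\varepsilon_n}$ jump simultaneously with the same magnitude $\varepsilon_n$. The only cosmetic difference is your use of the max norm (so the inclusion carries threshold $\varepsilon_0$) versus the paper's Euclidean norm (threshold $\varepsilon_0/2$); your remark on $Y^n$-stopping times being $(\scF_t)$-stopping times is a welcome clarification, and your $Y^n_0 = (g(0)-\varepsilon_n,\varepsilon_n)$ is in fact more accurate than the paper's shorthand.
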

\begin{proof}
	In view of \cref{prop: Aldous tightness criterion}, choose the space $E:= \RR^2$ equipped with Euclidean norm $\abs{\cdot}$ and let $Y^n:= (X^{\varepsilon_n}, L^{\varepsilon_n}) \in D\paren[\big]{ [0,\infty), E }$.
	Then $Y^n_0 \!=\! (-\varepsilon_n, \varepsilon_n)$ and $J_T(Y^n) \!=\! \sqrt{2}\varepsilon_n$ form tight sequences in $E$ and $\RR$, respectively.
	Furthermore,
	\[
		\PP\brackets[\big]{ \abs{Y^n_{\aldousStoppingTime+\delta} - Y^n_\aldousStoppingTime} \ge \varepsilon_0 } 
			\le \PP\brackets[\Big]{ \abs{ X^{\varepsilon_n}_{\aldousStoppingTime + \delta} - X^{\varepsilon_n}_\aldousStoppingTime } \ge \frac{\varepsilon_0}{2} } + \PP\brackets[\Big]{ \abs{L^{\varepsilon_n}_{\aldousStoppingTime+\delta} - L^{\varepsilon_n}_\aldousStoppingTime} \ge \frac{\varepsilon_0}{2} }\,.
	\]
	Hence $Y^n$ also satisfies Aldous's criterion and therefore is tight.
\end{proof}

\noindent
Tightness only implies weak convergence of a subsequence. 
It remains to show (in \cref{lemma: approx weak convergence}) that every limit point satisfies \labelcref{eq: diffusion,eq: local time} and that uniqueness in law holds. The latter will follow from pathwise uniqueness results for SDEs with reflection, while for  the former we apply results from \cite{KurtzProtter96} on weak converges of SDEs.
For that purpose, note that the approximated local times form a \emph{good} sequence of semimartingales (cf.~\cite[Definition~7.3]{KurtzProtter96}), as shown in the following
\begin{lemma} \label{lemma: approx integrator is good}
	The  sequence $(L^{\varepsilon_n})_{n}$ is of uniformly controlled variation and thus \emph{good}.
\end{lemma}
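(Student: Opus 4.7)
The plan is to exploit the fact that $L^{\varepsilon_n}$ is a non-decreasing, pure-jump càdlàg process with jump size $\varepsilon_n$, hence trivially of finite variation. Recall that the UCV property (see \cite[Sect.~7]{KurtzProtter96} and \cite{KurtzProtter91}) requires, for every $\eta>0$, decompositions $L^{\varepsilon_n} = M^{n,\eta} + A^{n,\eta}$ into a local martingale $M^{n,\eta}$ with $\abs{\Delta M^{n,\eta}} \le \eta$ and a finite-variation process $A^{n,\eta}$, such that for every $t>0$ the sequence $\braces[\big]{[M^{n,\eta}]_t + \mathrm{TV}_t(A^{n,\eta})}_n$ is tight in $\RR$. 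Goodness of the sequence then follows from UCV by the main result of \cite[Thm.~7.10]{KurtzProtter96}, so it suffices to verify UCV.

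The natural candidate is the trivial decomposition: take $M^{n,\eta}\equiv 0$ and $A^{n,\eta} := L^{\varepsilon_n}$. The jump bound on $M^{n,\eta}$ is vacuous and $[M^{n,\eta}]_t = 0$. Since $L^{\varepsilon_n}$ is non-decreasing with $L^{\varepsilon_n}_{0-}=0$, one has $\mathrm{TV}_t(A^{n,\eta}) = L^{\varepsilon_n}_t$. Thus the UCV requirement reduces to tightness in $\RR$ of $\braces{L^{\varepsilon_n}_t}_n$ for each fixed $t>0$. But this is immediate from \cref{lem: Le tight}: tightness of the càdlàg processes $(L^{\varepsilon_n})_n$ in the Skorokhod space $D([0,\infty),\RR)$ entails tightness of the continuous functional $\sup_{s \le t} L^{\varepsilon_n}_s$ in $\RR$, which by monotonicity coincides with $L^{\varepsilon_n}_t$.

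The one subtlety is to match the exact formulation of UCV intended by Kurtz and Protter: if one prefers the variant requiring $L^1$-boundedness of $[M^{n,\eta}]_t + \mathrm{TV}_t(A^{n,\eta})$, one simply localizes by the stopping times $\tau^n_\eta := \inf\braces{s \ge 0 \mid L^{\varepsilon_n}_s \ge \eta}$, observing that $L^{\varepsilon_n}_{t \wedge \tau^n_\eta} \le \eta + \varepsilon_n \le \eta + \varepsilon_1$ almost surely, because $L^{\varepsilon_n}$ only jumps by $\varepsilon_n$. This yields the required uniform-in-$n$ bound. Either way, no substantive obstacle arises; the content of the lemma is really that of \cref{lem: Le tight} combined with the pure-jump, monotone structure of $L^{\varepsilon_n}$.
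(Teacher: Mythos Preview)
Your proposal is correct and follows essentially the same route as the paper: take the trivial decomposition $M\equiv 0$, $A=L^{\varepsilon_n}$, use monotonicity so that total variation equals $L^{\varepsilon_n}_t$, and then invoke tightness of $(L^{\varepsilon_n})_n$ from \cref{lem: Le tight} together with \cite[Thm.~7.10]{KurtzProtter96}. The paper verifies UCV directly via the stopping-time formulation of \cite[Def.~7.5]{KurtzProtter96}: for each $\alpha>0$ it picks (by tightness) a level $C$ with $\PP[L^{\varepsilon_n}_\alpha > C] \le 1/\alpha$, sets $\tau_{n,\alpha}:=\inf\{t: L^{\varepsilon_n}_t>C\}$, and bounds $\EE[L^{\varepsilon_n}_{t\wedge\tau_{n,\alpha}}]$ by a constant. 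Your ``localization variant'' is aiming at exactly this, but note that the threshold must be chosen from tightness (as the paper does with $C$), not set equal to the decomposition parameter $\eta$; as written, your $\tau^n_\eta$ gives the $L^1$ bound but not the required $\PP[\tau^n_\eta\le\alpha]\le 1/\alpha$. This is only a notational slip --- your primary tightness argument already suffices.
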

\begin{proof}
	Let $\delta:= \sup_n \varepsilon_n$. 
	Then all processes $L^{\varepsilon_n}$ have jumps of size at most $\delta < \infty$.
	Fix some $\alpha > 0$.
	By tightness, there exists some $C \in \RR$ such that $\PP\brackets{ L^{\varepsilon_n}_\alpha > C } \le 1/\alpha$.
	So the stopping time $\tau_{n,\alpha}:= \inf\braces{ t \ge 0 \mid L^{\varepsilon_n}_t > C }$ satisfies
	\(
		\PP\brackets{ \tau_{n,\alpha} \le \alpha } = \PP\brackets{ L^{\varepsilon_n}_\alpha > C } \le 1/\alpha\,.
	\)
	Moreover, by monotonicity of $L^{\varepsilon_n}$ we have 
	\(
		\EE\brackets[\Big]{ \int_0^{t \wedge \tau_{n,\alpha}} \diff\abs{L^{\varepsilon_n}}_s } 
			= \EE\brackets{ L^{\varepsilon_n}_{t \wedge \tau_{n,\alpha}} } \le C < \infty \,.
	\)
	Hence $(L^{\varepsilon_n})$ is of uniformly controlled variation in the sense of \cite[Definition~7.5]{KurtzProtter96}.
	So by \cite[Theorem~7.10]{KurtzProtter96} it is a \emph{good} sequence of semimartingales.
\end{proof}

We have gathered all necessary results to prove convergence of our approximating diffusion and local time to the continuous counterpart.
\begin{lemma}[Weak convergence of the approximations] \label{lemma: approx weak convergence}
	The sequence $\paren{ X^{\varepsilon_n}, L^{\varepsilon_n} }_{n}$ of càdlàg processes defined by \eqref{def: approx X} -- \eqref{def: approx L} converges weakly to the unique  continuous strong solution $(X,L)$ of \eqref{eq: diffusion} -- \eqref{eq: local time}.
\end{lemma}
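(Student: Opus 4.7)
The plan is to combine the tightness established in \cref{lem: pair (Xe Le) tight} with the good-sequence property from \cref{lemma: approx integrator is good} to identify subsequential weak limits as solutions to \eqref{eq: diffusion}--\eqref{eq: local time}, then upgrade subsequential to full weak convergence via uniqueness, and finally pass from weak existence to strong solution via Yamada--Watanabe.

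\emph{Step 1 (extracting a limit).} By joint tightness of $(X^{\varepsilon_n}, L^{\varepsilon_n}, W)_n$ (adding the trivially tight marginal $W$) and Prokhorov's theorem, any subsequence admits a further subsequence converging weakly in the Skorokhod topology to some $(X, L, W')$ with $W'$ a Brownian motion. Since jumps of both $X^{\varepsilon_n}$ and $L^{\varepsilon_n}$ are of size $O(\varepsilon_n)\to 0$, the limit $(X, L)$ has continuous paths, and Skorokhod's representation theorem allows us to work on a suitable probability space where the convergence is almost sure and, thanks to continuity of the limit, uniform on compacts.

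\emph{Step 2 (identifying the limit).} Each approximation satisfies
\[
    X^{\varepsilon_n}_t = g(0) - \varepsilon_n + \int_0^t b(X^{\varepsilon_n}_s)\,\diff s + \int_0^t \sigma(X^{\varepsilon_n}_s)\,\diff W_s - L^{\varepsilon_n}_t.
\]
Lipschitz continuity of $b,\sigma$, together with \cref{lemma: approx integrator is good} and the Kurtz--Protter weak-convergence theorem for stochastic integrals with good integrators \cite[Thm.~7.10]{KurtzProtter96}, permit passage to the limit and yield \eqref{eq: diffusion} with driving Brownian motion $W'$. The state constraint $X_t \le g(L_t)$ transfers from $X^{\varepsilon_n}_t \le g(L^{\varepsilon_n}_t)$ (immediate from the construction and monotonicity of $g$) by continuity of $g$ and uniform-on-compacts convergence. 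For the minimality condition \eqref{eq: local time}, I exploit that $\diff L^{\varepsilon_n}$ charges only jump times $s$ at which $X^{\varepsilon_n}_{s-} = g(L^{\varepsilon_n}_{s-})$, whence
\[
    \int_0^T \bigl( g(L^{\varepsilon_n}_{s-}) - X^{\varepsilon_n}_{s-} \bigr)\,\diff L^{\varepsilon_n}_s = 0 \quad\text{for every } n,
\]
and passing to the limit gives $\int_0^T (g(L_s) - X_s)\,\diff L_s = 0$, which combined with $X \le g(L)$ is equivalent to \eqref{eq: local time}.

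\emph{Step 3 (uniqueness and strong solution).} By \cref{rmk: reflected-OU-difficult}, pathwise uniqueness for \eqref{eq: diffusion}--\eqref{eq: local time} follows from the Dupuis--Ishii theory applied to the degenerate $\RR^2$-valued diffusion $(X,L)$ with oblique reflection in direction $(-1,+1)$ at the smooth boundary $\{x = g(\ell)\}$. Uniqueness in law then forces all subsequential weak limits to coincide, giving full weak convergence $(X^{\varepsilon_n}, L^{\varepsilon_n}) \weakConvergenceTo (X, L)$. Weak existence furnished by the construction together with pathwise uniqueness yields, via Yamada--Watanabe, a unique strong solution defined on the original probability space and driven by $W$.

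\emph{Main obstacle.} The identification of \eqref{eq: local time} in Step~2 is the delicate point: the integrator $\diff L^{\varepsilon_n}$ is purely atomic and supported exactly where $X^{\varepsilon_n}_{s-} = g(L^{\varepsilon_n}_{s-})$, whereas the limiting measure $\diff L$ is continuous. Passing to the limit in the above integral requires combining uniform-on-compacts convergence of the integrands (from continuity of $g$ via the Skorokhod representation) with uniform-in-$n$ tightness of the total variation $L^{\varepsilon_n}_T$ supplied by \cref{lem: Le tight}; a stopping argument that freezes these processes before $L^{\varepsilon_n}_T$ exceeds a threshold makes the limiting procedure rigorous.
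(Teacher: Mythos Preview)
Your proposal is correct and follows essentially the same architecture as the paper: tightness $\Rightarrow$ subsequential weak limits via Prokhorov, identification of the limit SDE via Kurtz--Protter good sequences, and then pathwise uniqueness (Dupuis--Ishii) plus Yamada--Watanabe to obtain the strong solution and full convergence.

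The one genuine difference is how you verify the minimality condition \eqref{eq: local time}. The paper approximates the indicator $\indicator_{\{x=g(\ell)\}}$ from above by continuous tent functions $h_\delta$, passes to the weak limit in $\int h_\delta(X^{\varepsilon_n}_{s-},L^{\varepsilon_n}_{s-})\,\diff L^{\varepsilon_n}_s$ using the good-sequence property, and then sends $\delta\downarrow 0$ by monotone convergence against the measure $\diff\tilde L$. Your route is more direct: you observe that $\int_0^T \bigl(g(L^{\varepsilon_n}_{s-})-X^{\varepsilon_n}_{s-}\bigr)\,\diff L^{\varepsilon_n}_s=0$ for every $n$ and pass to the limit once, using that the integrand is a continuous function of $(X^{\varepsilon_n},L^{\varepsilon_n})$ so Kurtz--Protter applies, and then deduce minimality from nonnegativity of the integrand together with $X\le g(L)$. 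Your argument is shorter and avoids the auxiliary $h_\delta$ family; the paper's version has the minor advantage that the integrands $h_\delta$ are bounded, so no localization is needed even in principle, whereas your integrand $g(\ell)-x$ is unbounded and you (correctly) flag the need for a stopping argument to control $L^{\varepsilon_n}_T$. A second point the paper makes explicit and you leave implicit is non-explosion of the strong solution: Dupuis--Ishii is applied on bounded subdomains $G_k$, and the Yamada--Watanabe coupling with the globally defined weak limit $(\tilde X,\tilde L)$ is what forces $\tau_\infty=\infty$; this is contained in your Step~3 but deserves a sentence.
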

\begin{proof}
	By Prokhorov's theorem, tightness of $(X^{\varepsilon_n}, L^{\varepsilon_n}, W)_n$ implies weak convergence of a subsequence to some limit point, $\paren{X^{\varepsilon_{n_k}}, L^{\varepsilon_{n_k}}, W}_k \weakConvergenceTo \paren{ \tilde X, \tilde L, \tilde W} \in D\paren[\big]{[0,\infty),\RR^3}$.
	Continuity of $(\tilde X, \tilde L)$ is clear since $\varepsilon_n \to 0$ is the maximum jump size.
	First we prove that $(\tilde X, \tilde L)$ satisfies the asserted SDEs.
	Afterwards, we will prove uniqueness of the limit point.
	To ease notation, let w.l.o.g.\ the subsequence $(n_k)$ be $(n)$.
	
	By \cite[Theorem~8.1]{KurtzProtter96} we get that $(\tilde X, \tilde L)$ satisfy~\eqref{eq: diffusion} for the semimartingale $\tilde W$. 
	That $\tilde W$ is a Brownian motion follows from standard arguments, cf.~\cite[proof of Theorem~1.9]{NystromOnskog10}. 
	As $D\paren[\big]{[0,\infty),\RR^3}$ is separable we find, by an application of the Skorokhod representation theorem, that $\tilde L$ is non-decreasing and $\tilde X_t \le g(\tilde L_t)$ for all $t \ge 0$, $\PP$-a.s. because these properties already hold for $(X^{\varepsilon_n}, L^{\varepsilon_n})$.
	
	To prove that $\tilde L$ grows only at times $t$ with $\tilde X_t = g(\tilde L_t)$, we have to approximate the indicator function by continuous functions.
	For $\delta > 0$ define
	\begin{align*}
		h_\delta(x,\ell) &:= \begin{cases}
					\paren[\big]{x-g(\ell)}/\delta + 1 &\text{for } g(\ell) - \delta \le x \le g(\ell),
				\\	1 - \paren[\big]{x-g(\ell)}/\delta &\text{for } g(\ell) \le x \le g(\ell) + \delta,
				\\	0 &\text{otherwise},
				\end{cases}
\end{align*}
\[
	h_0(x,\ell) := \indicator_{\braces{ x=g(\ell) }} 
	\;\text{and} \;
	H^{\delta,n}_t := h_\delta( X^{\varepsilon_n}_t, L^{\varepsilon_n}_t )
	\;\text{and} \;
	\tilde H^\delta_t := h_\delta( \tilde X_t , \tilde L_t )\,.
\]	

For $\delta \searrow 0$ the functions $h_\delta \searrow h_0$ converge pointwise  monotonically.
Continuity of $h_\delta$ implies weak convergence $\paren{ H^{\delta,n}, L^{\varepsilon_n} } \weakConvergenceTo \paren{ \tilde H^\delta, \tilde L }$.
By \cref{lemma: approx integrator is good}, $(L^{\varepsilon_n})$ is a good sequence. So for every $\delta > 0$ the stochastic integrals
	\(
		\int_0^\cdot H^{\delta,n}_{s-} \diff L^{\varepsilon_n}_s \weakConvergenceTo \int_0^\cdot \tilde H^\delta_{s-} \diff \tilde L_s\,
	\)
converge weakly.
	Note that $\diff L^{\varepsilon_n}_t = H^{0,n}_{t-} \diff L^{\varepsilon_n}_t$.
	Hence, for every $\delta > 0$ we have
	\[
		\int_0^\cdot H^{\delta,n}_{s-} \diff L^{\varepsilon_n}_s = \int_0^\cdot H^{\delta,n}_{s-} H^{0,n}_{s-} \diff L^{\varepsilon_n}_s = \int_0^\cdot H^{0,n}_{s-} \diff L^{\varepsilon_n}_s = L^{\varepsilon_n}\,.
	\]
	With weak convergence $L^{\varepsilon_n} \weakConvergenceTo \tilde L$ it follows for every $\delta > 0$ that
	\(
		\tilde L_t = \int_0^t \tilde H^\delta_{s-} \diff \tilde L_s\,.
	\)
	By monotonicity of $\tilde L$, $\diff \tilde L_t$ defines a random measure on $[0,\infty)$.
	Hence monotone convergence of $\tilde H^\delta_t \searrow \tilde H^0_t$ yields
	\(
		\diff \tilde L_t = h_0(\tilde X_t, \tilde L_t) \diff \tilde L_t\,.
	\)

	Thus, we showed that $(X^\varepsilon, L^\varepsilon)$ converges in distribution to a weak solution $(\tilde X, \tilde L)$ of the reflected SDE, i.e.~it might be defined on a different probability space with its own Brownian motion. Note that $(\tilde X, \tilde L)$ is continuous on $[0,\infty)$ and that $\tilde \tau_\infty := \sup_{k} \tilde \tau_k = \infty$ a.s., where $\tilde \tau_k := \inf \braces{ t > 0 \mid \abs{\tilde X_t} \vee \tilde L_t > k }$. To show the existence and  uniqueness of a strong solution as stated in the theorem, we will use the results from \cite{DupuisIshii93}.
	Consider the domain~$\bar G := \braces{ (x,\ell) \in \RR^2 \mid x \le g(\ell), \ell \ge 0 }$.
	We may interpret the process $(X_t, L_t)$ as a continuous diffusion in $\bar G$ with oblique reflection in direction $(-1,+1)$ at the boundary, although the notion of a two-dimensional reflection seems unusual here, because $(X,L)$ only varies in one dimension in the interior of $G$.
	The unbounded domain $G$ can be exhausted by bounded domains $G_k:= \braces[\big]{ (x,\ell) \in G \bigm| \abs{x},\abs{\ell} < k }$, which might have a non-smooth boundary especially at $\paren{ g(0), 0 }$, but still satisfy \cite[Cond.~(3.2)]{DupuisIshii93}. 
	Hence, by \cite[Cor.~5.2]{DupuisIshii93} the process $(X,L)$  exists (up to explosion time) on the initial probability space and is (strongly) unique on $\rightOpenStochasticInterval{0, \tau_k}$ with exit time $\tau_k:= \inf \braces{ t > 0 \mid \abs{X_t} \vee L_t > k }$, for all $k\in \NN$. 
	So $(X,L)$ is unique until explosion time $\tau_\infty:= \sup_k \tau_k$. 
	Moreover, by \cite[Theorem~5.1]{DupuisIshii93} we have the following pathwise uniqueness result: for any two continuous solutions $(X^1, L^1)$ and $(X^2, L^2)$ with explosion times $\tau^1_\infty$ and $\tau^2_\infty$, respectively defined on the same probability space with the same Brownian motion and the same initial condition, we have that $X^1 = X^2 $ and $L^1 = L^2$ on $\closedStochasticInterval{0,\tau^1_k\wedge \tau^2_k}$ for every $k\in \NN$ a.s. 
	Using a known argument due to Yamada and Watanabe, ideas being as in \cite[Ch.~5.3.D]{KaratzasShreveBook}, one can bring the two (weak) solutions $(\tilde X, \tilde L, \tilde W)$ and $(X, L, W)$ to a canonical space with a common Brownian motion. 
	By pathwise uniqueness there, one concludes that $\tau_\infty = \infty$ a.s.~(as $\tilde\tau_\infty = \infty$). 
	Hence the strong solution $(X,L)$ does not explode in finite time. 
	In addition, we conclude uniqueness in law like in \cite[Prop.~5.3.20]{KaratzasShreveBook} and thus any weak limit point of the approximating sequence $(X^\varepsilon, L^\varepsilon)$ will have the same law as $(X, L)$.
\end{proof}

This convergence result can be strengthened as follows.
\begin{corollary}[Convergence in probability] \label{lemma: approx ucp}
	The sequence $\paren{ X^{\varepsilon_n}, L^{\varepsilon_n} }_{n}$ of càdlàg processes defined by \eqref{def: approx X}--\eqref{def: approx L} converges in probability to $(X,L)$ defined by \eqref{eq: diffusion}--\eqref{eq: local time}.
\end{corollary}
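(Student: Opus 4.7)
The plan is to upgrade the weak convergence established in \cref{lemma: approx weak convergence} to uniform convergence in probability via a two-subsequence argument in the spirit of Gyöngy--Krylov, exploiting the pathwise uniqueness of the reflected SDE that was already established there.

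First, I would fix two arbitrary subsequences $(n_k)$ and $(m_k)$ and consider the joint process
\[
	Z_k := \paren[\big]{ X^{\varepsilon_{n_k}},\, L^{\varepsilon_{n_k}},\, X^{\varepsilon_{m_k}},\, L^{\varepsilon_{m_k}},\, W }.
\]
Tightness of each coordinate pair $(X^{\varepsilon_n}, L^{\varepsilon_n})_n$ from \cref{lem: pair (Xe Le) tight}, together with the fact that $W$ is a fixed process, yields tightness of $(Z_k)_k$ in the Skorokhod space $D\paren[\big]{[0,\infty),\RR^5}$. Prokhorov's theorem then delivers a weakly convergent further subsequence with limit $(\tilde X^1, \tilde L^1, \tilde X^2, \tilde L^2, \tilde W)$. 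Exactly the same Kurtz--Protter argument used in the proof of \cref{lemma: approx weak convergence}, combined with the UCV property established in \cref{lemma: approx integrator is good} and \cite[Theorem~8.1]{KurtzProtter96}, shows that each triple $(\tilde X^i, \tilde L^i, \tilde W)$, $i=1,2$, is a weak solution of \eqref{eq: diffusion}--\eqref{eq: local time} driven by the \emph{same} Brownian motion $\tilde W$, with the same initial condition $(g(0), 0)$.

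Next I would invoke the pathwise uniqueness result for the reflected SDE already cited in the proof of \cref{lemma: approx weak convergence} via \cite[Theorem~5.1]{DupuisIshii93}: after a Yamada--Watanabe-type transfer to a canonical space (exactly as performed at the end of \cref{lemma: approx weak convergence}), pathwise uniqueness forces $(\tilde X^1, \tilde L^1) = (\tilde X^2, \tilde L^2)$ almost surely. Consequently $\paren[\big]{ X^{\varepsilon_{n_k}} - X^{\varepsilon_{m_k}},\, L^{\varepsilon_{n_k}} - L^{\varepsilon_{m_k}} } \weakConvergenceTo 0$ along the extracted sub-subsequence, and since an extraction of this type is available for every pair of subsequences, the sequence $(X^{\varepsilon_n}, L^{\varepsilon_n})_n$ is Cauchy in probability with respect to any complete metric inducing the Skorokhod topology. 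By \cref{lemma: approx weak convergence} its limit in probability must then agree with $(X,L)$.

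Finally, because $(X,L)$ has continuous paths, Skorokhod $J_1$-convergence to $(X,L)$ coincides with locally uniform convergence, so convergence in probability in the Skorokhod metric automatically upgrades to uniform convergence on compacts in probability, yielding the claimed ucp convergence. The main (and essentially only) subtle point is ensuring that the two weak sub-limits $(\tilde X^i, \tilde L^i)$ are driven by a common Brownian motion $\tilde W$ so that pathwise uniqueness can be applied to identify them; this is exactly what the good-sequence/UCV stability of \cref{lemma: approx integrator is good} together with \cite[Theorem~8.1]{KurtzProtter96} secures.
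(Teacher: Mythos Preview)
Your argument is correct but follows a genuinely different route from the paper. The paper proceeds in the spirit of \cite[Cor.~5.6]{KurtzProtter91}: from \cref{lemma: approx weak convergence} one has weak convergence of the triple $(X^{\varepsilon_n}, L^{\varepsilon_n}, W) \weakConvergenceTo (X,L,W)$, and strong uniqueness furnishes a measurable functional $H$ with $(X,L)=H(W)$. Then for bounded continuous $F$ one takes $G(W):=F(H(W))=F(X,L)$ and computes directly that $\EE\brackets[\big]{(F(X^{\varepsilon_n},L^{\varepsilon_n})-F(X,L))^2}\to 0$, which gives convergence in probability without any subsequence gymnastics. Your Gy\"ongy--Krylov two-subsequence argument instead leverages pathwise uniqueness to force the diagonal limit, and then uses completeness of the Skorokhod metric. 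Both rely on the same structural input (pathwise/strong uniqueness from \cite{DupuisIshii93}), but the paper's proof is shorter and avoids having to identify the in-probability limit with the strong solution after the fact. Your approach, on the other hand, is the more portable template and does not require extending the test-function identity from continuous to measurable $G$.

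One point you pass over a bit quickly: tightness of $(Z_k)_k$ in $D([0,\infty),\RR^5)$ does \emph{not} follow formally from tightness of the coordinate blocks, since the two pairs $(X^{\varepsilon_{n_k}},L^{\varepsilon_{n_k}})$ and $(X^{\varepsilon_{m_k}},L^{\varepsilon_{m_k}})$ jump at different times. Here it is fine because Aldous' criterion (\cref{prop: Aldous tightness criterion}) has already been verified componentwise in \cref{lem: Le tight,lem: Xe tight}, and the criterion lifts to the $\RR^5$-valued process exactly as in the proof of \cref{lem: pair (Xe Le) tight} (the maximal jump of $Z_k$ is bounded by $\sqrt{2}\max(\varepsilon_{n_k},\varepsilon_{m_k})\to 0$). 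It would be worth making this explicit.
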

\begin{proof}
	Following the proof of \cite[Cor.~5.6]{KurtzProtter91}, we will strengthen weak convergence $(X^{\varepsilon_n}, L^{\varepsilon_n}) \weakConvergenceTo (X,L)$ to convergence in probability.
	First, note \cref{lemma: approx weak convergence} implies weak convergence of the triple $(X^{\varepsilon_n}, L^{\varepsilon_n}, W) \weakConvergenceTo (X,L,W)$ by e.g.~\cite[Corollary~3.1]{SchiopuKratina85}.
	Hence, for every bounded continuous $F: D([0,\infty);\RR^2) \to \RR$ and every bounded continuous $G: C([0,\infty);\RR) \to \RR$, we have
	$
		\lim_{n \to \infty} \EE\brackets{ F(X^{\varepsilon_n}, L^{\varepsilon_n}) G(W) } = \EE[F(X,L)G(W) ]
		\,.
	$
	Now, the previous equation even holds for all bounded measurable $G$ by $L^1$-approximation of measurable functions by continuous functions.
	By strong uniqueness of $(X,L)$, there exists a measurable function $H: C([0,\infty);\RR) \to C([0,\infty);\RR^2)$ such that $(X,L) = H(W)$.
	In particular, $G(W):= F(H(W)) = F(X,L)$ is bounded and measurable, so we conclude
	\begin{align*}
		\MoveEqLeft
		\lim_{n \to \infty} \EE\brackets[\big]{\paren{ F(X^{\varepsilon_n}, L^{\varepsilon_n}) -  F(X,L)}^2}
	\\
			&= \lim_{n \to \infty} \paren[\big]{ \EE\brackets[\big]{ F(X^{\varepsilon_n}, L^{\varepsilon_n})^2 } - 2 \EE\brackets[\big]{ F(X^{\varepsilon_n}, L^{\varepsilon_n}) F(X,L) } + \EE\brackets[\big]{ F(X,L)^2 } }
			=0
	\end{align*}		
	and hence convergence in probability follows.
\end{proof}

\begin{corollary}[Weak convergence of the inverse local times] \label{lemma: tau weak convergence}
For any $\ell > 0$, the sequence $\paren{ \tau^{\varepsilon_n}_\ell }_n$ from~\eqref{def: approx tau} converges in law to the inverse local time $\tau_\ell$ defined by~\eqref{def: tau}.
\end{corollary}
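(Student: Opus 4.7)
The plan is to deduce $\tau^{\varepsilon_n}_\ell \to \tau_\ell$ in $\PP$-probability (hence in law) from the ucp convergence $L^{\varepsilon_n} \to L$ of \cref{lemma: approx ucp}, by sandwiching $\tau^{\varepsilon_n}_\ell$ between $\tau_\ell - \eta$ and $\tau_\ell + \eta$ for any fixed $\eta > 0$. After localizing on $\{\tau_\ell + \eta \le T\}$ for $T$ large, the uniform convergence $\sup_{t\le T}\abs{L^{\varepsilon_n}_t - L_t} \to 0$ in probability transfers strict inequalities $L_s \gtrless \ell$ at random times into their $L^{\varepsilon_n}$-analogues with probability tending to one.

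The upper bound is immediate: by the definition of $\tau_\ell$ as an infimum and monotonicity of $L$, one has $L_{\tau_\ell + \eta} > \ell$ almost surely, so ucp convergence yields $L^{\varepsilon_n}_{\tau_\ell + \eta} > \ell$ with probability $\to 1$, whence $\tau^{\varepsilon_n}_\ell \le \tau_\ell + \eta$. The lower bound, on the other hand, requires the \emph{strict} inequality $L_{\tau_\ell - \eta} < \ell$ (for $\eta < \tau_\ell$) almost surely, which amounts to excluding a flat segment of $L$ at level $\ell$ that terminates at $\tau_\ell$. Writing $\tau_{\ell-} := \inf\braces{t : L_t \ge \ell} \le \tau_\ell$, this is equivalent to the almost-sure identity $\tau_{\ell-} = \tau_\ell$.

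To establish $\tau_{\ell-} = \tau_\ell$ a.s., I would apply the strong Markov property of the Markov process $(X, L)$ at the stopping time $\tau_{\ell-}$: since $X_{\tau_{\ell-}} = g(\ell) = g(L_{\tau_{\ell-}})$, the shifted process $(X_{\tau_{\ell-}+t}, L_{\tau_{\ell-}+t} - \ell)_{t \ge 0}$ is distributed as a fresh elastically reflected diffusion issuing from its moving boundary. For such a process, the reflection local time increases strictly on every right-neighborhood of time~$0$~—~the standard non-stickiness of Skorokhod-type reflection with non-degenerate coefficient $\sigma > 0$, whereby the contact set $\braces{t: X_t = g(L_t)}$ admits no isolated right-point at $0$ and $dL$ charges every $[0, \delta]$. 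Verifying this immediate accumulation of reflection local time (equivalently, that the fixed level $\ell$ is not a jump point of the càdlàg process $\ell \mapsto \tau_\ell$) is the main obstacle of the plan; once in hand, combining the two sandwich bounds with the ucp convergence yields $\tau^{\varepsilon_n}_\ell \to \tau_\ell$ in probability, and a fortiori $\tau^{\varepsilon_n}_\ell \weakConvergenceTo \tau_\ell$.
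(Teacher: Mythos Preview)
Your sandwich argument is sound and in fact delivers convergence in probability, which is more than the stated convergence in law. The paper takes a different and much shorter route: it invokes the duality $\{\tau^{\varepsilon_n}_\ell \le t\} = \{L^{\varepsilon_n}_t \ge \ell\}$ (and the analogous identity for the limit), then simply passes to the limit in the distribution functions via the one-dimensional weak convergence $L^{\varepsilon_n}_t \Rightarrow L_t$, the latter holding at every $t$ because the limit process $L$ is continuous. No localisation, no sandwiching, no $\eta$.

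That said, the regularity you isolate as ``the main obstacle'' --- namely $\tau_{\ell-} = \tau_\ell$ a.s., i.e.\ that $L$ has no flat stretch at level $\ell$ --- is \emph{implicitly} needed in the paper's argument as well: the step $\PP[L^{\varepsilon_n}_t \ge \ell] \to \PP[L_t \ge \ell]$ from weak convergence of marginals requires $\PP[L_t = \ell] = 0$, and the identity $\PP[L_t \ge \ell] = \PP[\tau_\ell \le t]$ fails precisely when $\tau_{\ell-} < \tau_\ell$ with positive probability. So you are being more scrupulous than the paper here. Your strong-Markov reduction to immediate growth of the reflection local time at the boundary is the right idea; it can be grounded, for instance, by comparison at time $\tau_{\ell-}$ with a $(b,\sigma)$-diffusion reflected at the \emph{fixed} level $g(\ell)$, whose boundary local time is classically known to increase immediately when $\sigma>0$. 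What your approach buys is a stronger conclusion and an honest accounting of the one non-trivial point; what the paper's buys is brevity.
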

\begin{proof}
	Convergence $L^{\varepsilon_n} \weakConvergenceTo L$ implies $L^{\varepsilon_n}_t \weakConvergenceTo L_t$ at all continuity points of $L$, i.e.~at all points, hence
	\(
		\PP\brackets[\big]{ \tau^{\varepsilon_n}_\ell \le t } 
		= \PP\brackets[\big]{ L^{\varepsilon_n}_t \ge \ell } 
		\to
		\PP\brackets{ L_t \ge \ell } 
		= \PP\brackets{ \tau_\ell \le t }\,.
	\)
\end{proof}

\noindent
This completes the proof of \cref{thm: local time Laplace}.

\end{document}